\numberwithin{equation}{section}
\newcommand{\Real}{\mathbb R}
\newcommand{\Torus}{\mathbb T} 
\newcommand{\norm}[1]{\|#1\|}
\newcommand{\abs}[1]{\left\vert#1\right\vert}
\newcommand{\set}[1]{\left\{#1\right\}}
\newcommand{\grad}{\nabla}
\newtheorem{theorem}{Theorem}
\theoremstyle{lemma}
\theoremstyle{definition}
\newtheorem{remark}{Remark}
\theoremstyle{lemma}
\newtheorem{lemma}{Lemma}
\begin{document}
\title{Blister patterns and energy minimization in compressed thin films on compliant substrates} 

\author{Jacob Bedrossian\footnote{Partially supported by NSF Postdoctoral Fellowship in Mathematical Sciences, DMS-1103765}, Robert V. Kohn\footnote{Partially supported by NSF grants DMS-0807347 and OISE-0967140} \\
Courant Institute of Mathematical Sciences\\
New York University\\
{\tt jacob@cims.nyu.edu} and {\tt kohn@cims.nyu.edu}}

\date{\today}

\maketitle
\begin{abstract}
This paper is motivated by the complex blister patterns sometimes seen in thin
elastic films on thick, compliant substrates. 
These patterns are often induced by an elastic misfit which compresses the film. 
Blistering permits the film to expand locally, reducing the
elastic energy of the system. 
It is therefore natural to ask: what is the minimum elastic energy achievable by blistering on a fixed area fraction of the substrate? This is a variational problem involving both the {\it elastic deformation} of the film and substrate and
the {\it geometry} of the blistered region. It involves three small parameters: the
{\it nondimensionalized thickness} of the film, the {\it compliance ratio} of the film/substrate
pair and the {\it mismatch strain}. In formulating the problem, we use a small-slope (F\"oppl-von K\'arm\'an) approximation
for the elastic energy of the film, and a local approximation for the elastic energy of the substrate.

For a 1D version of the problem, we obtain ``matching'' upper and lower
bounds on the minimum energy, in the sense that both bounds have the same scaling behavior with respect to the small parameters. 
The upper bound is straightforward and familiar: it is achieved by periodic blistering on a specific length scale.
The lower bound is more subtle, since it must be proved without any assumption on the
geometry of the blistered region. 

For a 2D version of the problem, our results are less complete.
Our upper and lower bounds only ``match'' in their scaling with respect to the nondimensionalized
thickness, not in the dependence on the compliance ratio and the mismatch strain. 
The lower bound is an easy consequence of our one-dimensional analysis. 
The upper bound considers a 2D lattice of blisters, and uses ideas from the literature on the folding or ``crumpling'' of a
confined elastic sheet.
Our main 2D result is that in a certain parameter regime, the elastic energy
of this lattice is significantly lower than that of a few large blisters. 

\end{abstract}

\section{Introduction}
There are many technologically important systems involving thin films bonded to thick
substrates. Two specific applications are flexible electronics (see e.g. \cite{VellaEtAl09}
and the references there) and microfluidic devices (see e.g. \cite{VolinskyWatersWright05}). 

Films often experience compression, due e.g. to elastic mismatch or substrate curvature.
The consequences of compression vary widely. In some systems the film buckles but stays
bonded to the substrate (see e.g. \cite{AudolyBoudaoudIII08,CaiEtAl11,HuangHongSuo04,HuangHongSuo05,SongJiangEtAl08,KohnNguyen12}).
In other systems the film delaminates, i.e. forms blisters. While
the morphology of blistering varies considerably, many systems fall into one of the following categories:
\begin{enumerate}
\item[(i)] those that form a few large blisters, comparable in scale to the size of the substrate; and
\item[(ii)] those that form web-like or lattice-like two-dimensional blister patterns, with a length scale much smaller than
that of the substrate. 
\end{enumerate}
See e.g. \cite{GiolaOrtiz97} and the references there, as well as the more recent
articles \cite{MoonEtAl02,MoonJensenEtAl02,YuZhangChen09}. Specifically, see figures 3,4,7,10 in \cite{GiolaOrtiz97} for examples of web-like blister patterns and figures 1 and 14 in \cite{GiolaOrtiz97} for large isolated blisters. 

The driving force behind both buckling and blistering is a tendency to reduce the overall elastic
energy. Briefly: the film is compressed by the substrate; buckling reduces the compression by permitting the
film to expand; blistering reduces the compression still further, by eliminating the source
of compression in the interior of the blister.

Blisters grow incrementally; the process is similar to fracture in which dynamics plays a role \cite{SuoHutchinson90,HutchinsonThoulessLiniger92,VellingaEtAl08}.  
Moreover, the evolution of blistering depends on history \cite{HuangHongSuo04,HuangHongSuo05} as well as on imperfections in the system \cite{MoonEtAl04}. 
However, energy minimization is the driving force, so it is natural to ask:
\begin{enumerate}
\item[(a)] What buckling or blistering patterns are most effective in decreasing the
energy of the system?
\item[(b)] How much can the energy be decreased by this mechanism?
\end{enumerate}
Versions of these questions have been addressed for buckling without blistering
\cite{AudolyBoudaoudIII08,KohnNguyen12}, and for a blister of fixed geometry on a rigid substrate 
\cite{BelgacemContiSimoneMuller00,BelgacemContiSimoneMuller02,JinSternberg01}.

The present article addresses the same questions (a)-(b) in a different setting. Specifically: we address
the case where blistering is permitted and the geometry of the blistered region is part of the optimization.
We treat the area fraction of blistering as a parameter, fixed in advance. Our formulation of the
elastic energy is presented in Section \ref{sec:FvK} -- the final form is (\ref{def:VarP}) -- but briefly:
\begin{itemize}
\item In modeling the membrane and bending energy of the film we use a
small-slope (F\"oppl-von K\'arm\'an) approximation. Moreover, motivated by the
numerical work of Jagla \cite{Jagla07}, we assume the out-of-plane deformation
vanishes in the bonded region. 
\item In modeling the elastic energy of the substrate we use a local approximation, chosen so that for
periodic blister patterns it has the same scaling as the elastic energy of a semi-infinite substrate.
\end{itemize}
We mention in passing that blistering sometimes leads to material failure, e.g. via cracking along
ridges \cite{FaulhaberEtAl06,MoonJensenEtAl02,VellingaEtAl08}. In this paper, however, we
ignore the possibility of material failure: our films are always elastic.

Our problem has three small parameters: the {\it nondimensionalized thickness}, 
the {\it compliance ratio} and the {\it mismatch strain}. Our main focus is on how the energy scales with respect to these
parameters when they are sufficiently small. (Our focus is thus not on the behavior near a bifurcation;
rather, to use the terminology of \cite{DavidovitchEtAl11,DavidovitchEtAl12}, our interest lies in the
``far from threshold'' parameter regime.)
\bigskip 

Our results are stated with precision in Section \ref{sec:Statement-of-Results}. Summarizing
them briefly:
\begin{itemize}
\item {\it For a 1D version of the problem} quite similar to that considered
experimentally and theoretically in \cite{VellaEtAl09}, we obtain ``matching'' upper and lower
bounds on the minimum energy. They match in the sense that both bounds
have the same scaling behavior with respect to the small parameters. The upper bound is
familiar and easy: it is associated with periodic blistering on a well-chosen length scale
(consistent with the experiments reported in \cite{VellaEtAl09}). The lower bound is more
subtle, since it must be proved without any assumption on the geometry of the blistered set.

\item {\it For a 2D version of the problem} capable of representing lattice-like blister patterns,
our results are less complete. Our upper and lower bounds ``match'' in their scaling with respect
to the nondimensionalized thickness, but not in their dependence on the compliance ratio and mismatch strain. The
lower bound is an easy consequence of our one-dimensional analysis. The upper bound considers three alternatives:
a flat film; a 2D lattice of blisters with a well-chosen length scale; and a single large blister.
The most subtle part of our 2D discussion is our estimation of the energy of a 2D lattice of blisters.
It uses ideas from the literature on the folding or ``crumpling'' of a confined elastic sheet, specifically
a construction similar to that of the ``minimal ridge'' considered in
\cite{ContiMaggi08,Lobkovsky96,LobkovskyWitten97,Venkataramani03}.
\end{itemize}

One of the major questions in this area is: why do some systems form a lattice of blisters,
while others form a few large blisters? Our 2D results provide some insight, by identifying a parameter regime
in which a 2D lattice of blisters (with a well-chosen length scale) has smaller energy than a single large
blister (occupying the same area fraction of the substrate).

Another key question is: when a blister lattice forms, why do the individual blisters have a
``telephone-cord'' morphology? We have nothing new to contribute here. Perhaps the answer lies in
the mechanism by which blisters grow: a recent numerical study \cite{FaouEtAl12} shows that even
an isolated blister tends to grow with a telephone-cord morphology (see also \cite{Jagla07}).
Thinking variationally: our test function representing the 2D blister lattice uses blisters with
straight sides, but it is surely not even a local minimum. The results in \cite{Audoly99,ParryEtAl09}
suggest that straight-sided blisters can lower their energy further by developing a
less regular, telephone-cord-like morphology. We doubt, however, that this would change the
energy scaling law. We are not the first to take the view that blisters with straight sides provide
informative test functions; see e.g. the studies \cite{YuHutchinson02,CotterellChen00} concerning
substrate/film systems with different compliance ratios. The paper \cite{GiolaOrtiz97} includes an
energy-based explanation of the telephone-cord morphology, but the model used there is quite different
from ours (in particular it ignores the in-plane deformation of the film, and it takes the substrate to
be rigid).

As already noted above, our focus is on the scaling of the minimum energy with respect to the
nondimensionalized thickness, the compliance ratio and the mismatch strain -- the small parameters of this problem.
Our method is to prove upper bounds using well-chosen test functions, and lower bounds using
geometry-independent arguments. Other papers taking a similar viewpoint on problems involving
thin films include
\cite{BelgacemContiSimoneMuller00,BelgacemContiSimoneMuller02,BellaKohnCpam,BellaKohnMetric,BrandmanEtAl,ContiMaggi08,JinSternberg01,KohnNguyen12,Venkataramani03}.
For work on thin films with a similar viewpoint but providing mainly upper bounds, see
\cite{AudolyBoudaoudIII08,DavidovitchEtAl11,DavidovitchEtAl12,Lobkovsky96,LobkovskyWitten97}. A similar
viewpoint has also been applied to many other problems; for examples and some pointers to the
literature, see \cite{KohnICM}.

We are interested in the energetics of blister patterns. To avoid artifacts associated with boundaries, we use periodic boundary conditions with period $L$ (an arbitrary fixed size). Since $L$ is arbitrary, it is important that our estimates depend on it only through the non-dimensionalized film thickness $t/L$, where $t$ is the thickness of the film.  

\subsubsection*{Notations and Conventions} 
Studying periodic patterns is mathematically equivalent to working on the torus.  
Hence in what follows we denote the N-dimensional torus of length $L$ by $\Torus_L^N$ ($N = 1$ or $2$); we also use the convention $\Torus^N := \Torus^N_1$. 
For a (measurable) set $E \subset \Torus_L^N$ we write $\abs{E}$ for its ($N$-dimensional) measure. 
To avoid unnecessary clutter in formulas, we use the notation $f \lesssim g$ if there exists a constant $C > 0$
which is independent of the parameters of primary interest such that $f \leq Cg$; similarly, $f \sim g$ means there exists a constant $C > 0$ (also independent of the parameters of primary interest) such that $\frac{1}{C}f \leq g \leq Cf$. 

\subsection{F\"oppl-von K\'arm\'an Energy} \label{sec:FvK}
In this section we justify the energy we will be studying by examining several related problems, starting with the F\"oppl-von K\'arm\'an approximation for a totally bonded film. 
From there we discuss the F\"oppl-von K\'arm\'an approximation of a film that is permitted to debond, then we introduce several reductions to make the problem more tractable for rigorous analysis. 

Consider an $L\times L$ film with thickness $t$ and periodic boundary conditions, totally bonded to a substrate.
Let $u:\Torus_L^2 \rightarrow [0,\infty)$ be the out-of-plane displacement and $w:\Torus_L^2 \rightarrow \Real^2$ the in-plane displacement. 
The F\"oppl-von K\'arm\'an thin plate approximation (see e.g. \cite{AudolyPomeau}) suggests, after normalizing the bending modulus of the film to be one, an energy per unit area of the following form 
\begin{align} 
E_1[u,w] & = \frac{\alpha_m t}{L^2} \int_{[0,L]^2} \abs{e(w) + \frac{1}{2}\grad u \otimes \grad u - \eta I}^2 dx + \frac{t^3}{L^2}\int_{[0,L]^2} \abs{D^2 u}^2 dx + \frac{1}{L^2}\mathcal{E}_s[u,w], \label{def:EnonlocBonded}  
\end{align}  
where the small parameter $\eta > 0$ is the compressive {\it mismatch strain} between the substrate and film. 
We have set the Poisson's ratio to zero to simplify the arithmetic at no loss of generality: 
since we are only interested in scaling laws, all Hooke's laws are equivalent up to a computable constant. 
We will refer to the first term as the \emph{membrane energy}; it accounts for the resistance to stretching and compression in the film. 
The dimensionless constant $\alpha_m$ warrants some discussion. 
In classical thin plate theory, since we normalized the bending modulus to one and set the Poisson's ratio to zero, in reality $\alpha_m = 12$. 
However, we have chosen to denote $\alpha_m$ as a parameter, as it will indicate how large a role the membrane energy plays in the constructions and estimates, though it is important to keep in mind that it is a fixed $O(1)$ constant.    
The second term is the \emph{bending energy}, with the bending modulus normalized to one. 
The final term is the \emph{substrate energy}, which is the elastic energy in the substrate. As we discuss shortly, due to the normalization, it contains the third small parameter which is the ratio of the Young's moduli of the substrate and the film. 

The substrate is approximated as infinitely deep and treated as linearly elastic in the region $(x,y,z) \in \Torus_L^2 \times (-\infty,0]$ (also with Poisson's ratio zero without loss of generality). 
In \cite{AudolyBoudaoudI08} it is justified in the case of a relatively compliant substrate 
that the proper boundary condition to enforce at $z = 0$ is that of continuous displacement rather than traction.
As this is the case we will largely be interested in, we will take $\mathcal{E}_s$ to be the total elastic energy in $\Torus_L^2 \times (-\infty,0]$ with imposed displacement at $z=0$ given by the thin film displacement. 
It is classical that this energy is comparable to the $\dot{H}^{1/2}$ norm, defined on mean-zero functions via
\begin{align*} 
\norm{f}_{\dot{H}^{1/2}}^2 := \sum_{k\neq 0} \abs{k}\abs{\hat{f}(k)}^2, 
\end{align*}
where $\hat{f}(k) := \int_{\Torus^N_L} e^{2\pi i \frac{k}{L}\cdot x} f(x) dx$. 
By comparable we mean, 
\begin{align}
\mathcal{E}_s[w,u] \sim \alpha_s \norm{u}^2_{\dot{H}^{1/2}} + \alpha_s \norm{w}^2_{\dot{H}^{1/2}},  \label{def:BondedSubst}
\end{align}
where, due to the normalization we are taking, the dimensionless parameter $\alpha_s$ is comparable to the Young's modulus of the substrate divided by the Young's modulus of the film. We call it the {\it compliance ratio}; indeed, it reflects the relative stiffness of the substrate versus the film: $\alpha_s \gg 1$ corresponds to a substrate which is much stiffer than the film and $\alpha_s \ll 1$ corresponds to a substrate which is relatively compliant. 
In the case of partial delamination, we show that blister lattice patterns can achieve lower energies than large, isolated blisters in the compliant regime, and that this is optimal in 1D.  

The energy \eqref{def:EnonlocBonded} with \eqref{def:BondedSubst} (or something similar) has been studied in many works on pattern formation in films without blistering, for instance 
\cite{HuangHongSuo04,HuangHongSuo05,SongJiangEtAl08,CaiEtAl11,AudolyBoudaoudI08,AudolyBoudaoudII08,AudolyBoudaoudIII08,VellaEtAl09,KohnNguyen12}.
For bonded films, the optimal energy scaling law was identified in \cite{KohnNguyen12}.
Deriving an upper bound for the 1D version is a well-known computation, which we repeat here, taking $L = 1$ for simplicity. 
One can eliminate the membrane energy completely with the choice
\begin{align*} 
w(x) = \frac{\eta l}{4}\sin\left(\frac{4x}{l}\right), \;\;\; u(x) = \sqrt{\eta} l\cos\left(\frac{2x}{l}\right), 
\end{align*} 
for any $l > 0$. For such $w$ and $u$, the first term in \eqref{def:BondedSubst} scales like $O(\alpha_s \eta l)$ whereas the latter scales as $O(\alpha_s \eta^2 l)$, which is smaller by a factor of $\eta$. The bending gives a contribution of $O(t^3 \eta l^{-2})$ and optimizing in $l$ gives gives an energy which scales like $O(\alpha_s^{2/3}\eta t)$ and a well-defined optimal length-scale.

Having discussed the bonded case, we now turn to the focus of this work: the case in which the film is permitted to partially blister from the substrate.
We denote by $\Omega \subset [0,L]^2$ the region where the film remains bounded, and $\theta = \abs{\Omega}/L^2$ the area fraction. 
In our work we will treat $\theta$ as a parameter.
The energy per unit area in the partially blistered case differs from \eqref{def:EnonlocBonded} only in that the substrate energy now depends on $\Omega$, 
\begin{align} 
E_2[u,w,\Omega] & = \frac{\alpha_mt}{L^2}  \int_{[0,L]^2} \abs{e(w) + \frac{1}{2}\grad u \otimes \grad u - \eta I}^2 dx + \frac{t^3}{L^2}\int_{[0,L]^2} \abs{D^2 u}^2 dx + \frac{1}{L^2}\mathcal{E}_s[w,u,\Omega]. \label{def:EnonlocDimensional}  
\end{align}  
As before, the substrate energy $\mathcal{E}_s$ arises from a linear elasticity problem in the half-space $\Torus^2 \times (-\infty,0]$, however, now continuity of displacements is only enforced on the bonded region $\Omega$. 
In the debonded region the substrate instead satisfies traction-free boundary conditions.

Since blistering gives the system additional freedom, we expect that the energy \eqref{def:EnonlocDimensional} should be able to reach lower energies than \eqref{def:EnonlocBonded}. 
Consider again the one dimensional case with $L = 1$. 
Imagine a periodic profile with alternating blisters and bonded regions with characteristic length-scale $l$ which additionally satisfies: 
\begin{itemize} 
\item[(a)] $w_x = \eta$ and $u = 0$ in $\Omega$ and, 
\item[(b)] $\int_0^1 \abs{w_x + \frac{1}{2}u_x^2 - \eta}^2 dx = 0$. 
\end{itemize} 
Below in Section \ref{sec:UppBd1D} we make this construction explicit; see also Figure \ref{fig:Periodic1D}. Similar profiles have also been studied previously and observed in experiments \cite{VellaEtAl09}. 
Bending only occurs in the blisters and we will show that it scales as $O(t^3 \eta l^{-2})$. The substrate energy arises only from the shear imparted by $w$ which we will show in Section \ref{sec:UppBd1D} produces an energy which scales as $O(\alpha_s \eta^2 l)$. 
Optimizing in $l$ gives an energy which scales like $O(\alpha_s^{2/3}\eta^{5/3}t)$. Since $\eta$ is a small parameter, the energy is indeed smaller than the totally bonded case. 
The gain in $\eta$ came from the fact that the film no longer deflects out-of-plane in the bonded regions, which would typically cost more in the substrate energy than in-plane shear. See Remark \ref{rmk:CompBond} in Section \ref{sec:Statement-of-Results} for the analogous discussion in 2D. 

The above discussion motivates us to propose a simpler model in which $u = 0$ in $\Omega$ is imposed as a constraint.
Since it is a reduction of the set of admissible minimizers, any upper bound derived for this simpler model will also hold for \eqref{def:EnonlocDimensional}, so at the level of upper bounds, our results are unaffected by this constraint. 
Moreover, numerical simulations suggest that this change has little qualitative effect on the behavior of the system \cite{Jagla07}.

One of the major difficulties with \eqref{def:EnonlocDimensional} is the complex, nonlocal dependence
of the substrate energy on the geometry of the set $\Omega$, which defines the boundary conditions for the linear elasticity problem in the substrate. 
Rather than removing potential pathologies by imposing artificial constraints on $\Omega$, we choose to study an approximate energy which is local but has the same scaling properties.  
That is, we work with the following approximation: 
\begin{align}
E_3[u,w,\Omega] & = \frac{\alpha_m t}{L^2}\int_{[0,L]^2} \abs{e(w) + \frac{1}{2}\grad u \otimes \grad u - \eta I}^2 dx + \frac{t^3}{L^2}\int_{[0,L]^2} \abs{D^2u}^2 dx \nonumber \\ & \quad + \frac{\alpha_s}{L^2} \left(\int_{\Omega} \abs{\grad w}^2 dx \right)^{1/2} \left( \int_{\Omega} \abs{w}^2 dx \right)^{1/2},  \label{def:ElocalDimen}
\end{align}
constrained by the condition $u = 0$ in $\Omega$.  
The motivation for our local approximation of the substrate energy is the scale-invariant interpolation inequality $\norm{f}^{2}_{\dot{H}^{1/2}} \lesssim \norm{f}_{2}\norm{\grad f}_2$.
Due to the scale invariance, the scaling laws for \eqref{def:ElocalDimen} and \eqref{def:EnonlocDimensional} are expected to agree up to constants in most physical situations. 
Indeed, the upper bound estimates we present are equally valid for \eqref{def:ElocalDimen} and \eqref{def:EnonlocDimensional} as can be verified by a simple scaling argument: the upper bounds are proved by constructing explicit test functions of the form 
\begin{equation*}
\left(w_l(x,y),u_l(x,y)\right) = l \left(w_1\left( \frac{x}{l}, \frac{y}{l} \right), u_1 \left(\frac{x}{l}, \frac{y}{l}\right) \right), 
\end{equation*} 
where $l$ is a characteristic length-scale chosen in terms of the parameters of the problem. 
By the scale-invariance, it follows that the upper bound implied by the construction will be the same for both \eqref{def:ElocalDimen} and \eqref{def:EnonlocDimensional}. 

Finally, we non-dimensionalize length with respect to $L$. 
Choosing the scaling
\begin{align*}
w(x,y) = L \tilde{w}\left(\frac{x}{L},\frac{y}{L}\right), \quad u(x,y) = L\tilde{u}\left(\frac{x}{L},\frac{y}{L}\right), \quad \tilde \Omega = \set{\tilde x \in \Torus^2: L\tilde x \in \Omega},  
\end{align*}     
we may write the energy as follows (denoting $\tilde{x} = x/L$), 
\begin{align} 
E_3[u,w,\Omega] & = L\left[\alpha_m \frac{t}{L}\int_{[0,1]^2} \abs{e(\tilde{w}) + \frac{1}{2}\grad \tilde{u} \otimes \grad \tilde{u} - \eta I}^2 d\tilde{x} + \frac{t^3}{L^3}\int_{[0,1]^2} \abs{D^2 \tilde u}^2 d\tilde{x}\right] \nonumber \\ & \quad + L\left[\alpha_s \left(\int_{[0,1]^2 \cap \tilde \Omega} \abs{\grad \tilde w}^2 d\tilde{x} \right)^{1/2} \left( \int_{[0,1]^2 \cap \tilde \Omega} \abs{\tilde w}^2 d\tilde{x}\right)^{1/2} \right] \nonumber \\ 
& = L E_{ND}[\tilde{u},\tilde{w},\tilde \Omega]. \label{eq:dimensional}  
\end{align} 
Dropping the tildes and setting $h = t/L$ (the non-dimensionalized film thickness) we have reduced our problem to the study of the non-dimensionalized energy
\begin{align}
E_{ND}[u,w,\Omega] & = \alpha_m h\int_{[0,1]^2} \abs{e(w) + \frac{1}{2}\grad u \otimes \grad u - \eta I}^2 dx + h^3\int_{[0,1]^2} \abs{D^2u}^2 dx \nonumber \\ & \quad + \alpha_s \left(\int_{\Omega} \abs{\grad w}^2 dx \right)^{1/2} \left( \int_{\Omega} \abs{w}^2 dx \right)^{1/2}.  \label{def:Elocal}
\end{align}
By \eqref{eq:dimensional}, any statement about \eqref{def:Elocal} can be translated to a statement about \eqref{def:ElocalDimen}.
For the remainder of the paper we concentrate on \eqref{def:Elocal} with no further comment.     

To summarize, given $\theta \in (0,1)$, $\alpha_s,h,\eta > 0$, we are interested in determining upper and lower bounds on the following variational problem:
\begin{align}
E_{h,\alpha_s,\eta,\theta} & := \min_{(w,u,\Omega) \in \mathcal A} E_{ND}[w,u,\Omega], \label{def:VarP}
\end{align}
with the admissible class of minimizers given by 
\begin{align}   
\mathcal{A} & := \set{(w,u,\Omega)| w \in H^1(\Torus^2;\Real^2), u \in H^2(\Torus^2;[0,\infty)), \Omega \subset \Torus^2 \textup{ closed }, \abs{\Omega} = \theta, u|_{\Omega} = 0}, \label{def:A} 
\end{align}
with the obvious modifications for the 1D case.

\subsection{Statement of Results} \label{sec:Statement-of-Results}
In one dimension we can identify the optimal scaling law for \eqref{def:VarP} with respect to $h$,$\eta$ and $\alpha_s$; that is, we identify the minimum energy up to a prefactor which depends only on $\theta$.  
The upper bound in \eqref{ineq:upperbound1d2} is proved by considering a test function
which is a periodic arrangement of blisters and bonded regions. 

\begin{theorem} \label{thm:oneDLoc}
In one dimension, \eqref{def:VarP} satisfies
\begin{itemize}
\item[(i)] the lower bound 
\begin{align}
E_{h,\alpha_s,\eta,\theta} \geq K_1\min\left(\alpha_m \eta^{2}\theta^2, \alpha_s^{2/3} \eta^{5/3} \frac{\theta^{5/3}}{(1-\theta)^{1/3}}\right)h, \label{ineq:lowerbd1d}
\end{align} 
for some constant $K_1 > 0$; 
\item[(ii)] the upper bound 
\begin{align} 
\min\left(\alpha_m \eta^2, \alpha_m \eta^2 \theta + K_2\frac{h^2 \eta}{1-\theta} \right)h \geq E_{h,\alpha_s,\eta,\theta} \label{ineq:upperbound1d1}
\end{align} 
for some constant $K_2 > 0$.
The first upper bound is exhibited by $u = w = 0$ and the second is exhibited by a single large blister, setting $u = w = 0$ in the bonded region. 
\item[(iii)] If we additionally have 
\begin{align}
l_1 := \frac{h}{\eta^{1/3} \alpha_s^{1/3}} < c_0(1-\theta)^{2/3}\theta^{2/3}, \label{cond:1D}
\end{align}
for some constant $c_0$ then \eqref{def:VarP} also satisfies the upper bound 
\begin{align}
K_3\frac{\theta^{4/3}}{(1-\theta)^{2/3}}\alpha_s^{2/3}\eta^{5/3}h \geq E_{h,\alpha_s,\eta,\theta},  \label{ineq:upperbound1d2}
\end{align}
for some constant $K_3 > 0$.
This upper bound is exhibited by a periodic pattern of blisters and bonded regions with characteristic length-scale $\sim l_1$.  
\end{itemize}
\end{theorem}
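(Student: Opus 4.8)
\emph{Upper bounds (ii) and (iii).} The two bounds in (ii) are exhibited directly: $u=w\equiv0$ (with $\Omega$ any interval of measure $\theta$) gives membrane energy $\alpha_m\eta^2h$ and no other contribution; and a single blister occupying an interval of length $1-\theta$, with $w$ constant and $u\equiv 0$ on $\Omega$ and $w_x=\eta-\tfrac12 u_x^2$ on the blister (a fixed smooth bump profile rescaled so that $w$ closes up periodically), gives membrane energy $\alpha_m\eta^2\theta h$, zero substrate energy, and bending energy $\sim h^3\eta/(1-\theta)$ (the rescaled bump has height $\sim\sqrt\eta\,(1-\theta)$, so $\int u_{xx}^2\sim\eta/(1-\theta)$). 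For (iii) I would use the analogous periodic test function: $n$ cells of length $1/n$, each consisting of a bonded sub-interval of length $\theta/n$ carrying $u\equiv 0$, $w_x=\eta$, and a blister of length $(1-\theta)/n$ carrying a rescaled bump with $w_x=\eta-\tfrac12 u_x^2$, so that the membrane energy is exactly $0$ and $w$ is periodic. One computes bending energy $\sim h^3\eta/((1-\theta)^2 l^2)$ and substrate energy $\sim\alpha_s\eta^2\theta^2 l$, where $l=1/n$ (using that $w$ oscillates by $\sim\eta\theta l$ per cell and may be centered, so $\|w\|_{L^2(\Omega)}\sim\eta\theta^{3/2}l$ while $\|w_x\|_{L^2(\Omega)}=\eta\sqrt\theta$). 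Balancing these gives the optimal scale $l\sim l_1(\theta(1-\theta))^{-2/3}$ and energy $\sim\alpha_s^{2/3}\eta^{5/3}\theta^{4/3}(1-\theta)^{-2/3}h$, which is no larger than the right side of \eqref{ineq:upperbound1d2}; condition \eqref{cond:1D} is precisely the requirement that this optimal $l$ be $\lesssim1$, i.e.\ that at least one cell fit inside the period.

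\emph{Lower bound (i): reduction.} Since $u|_\Omega=0$ forces $u_x=0$ a.e.\ on $\Omega$, the membrane integrand restricted to $\Omega$ equals $(w_x-\eta)^2$; so if $\int_\Omega(w_x-\eta)^2\ge\epsilon_0\eta^2\theta^2$ for a small absolute constant $\epsilon_0$, the membrane term already exceeds $\epsilon_0\alpha_m\eta^2\theta^2 h$ and the first alternative in \eqref{ineq:lowerbd1d} holds. Assume the complementary case, so the membrane integrand has $L^2(\Torus)$-norm at most $\sqrt{\epsilon_0}\,\eta\theta$. Using $\int_\Torus w_x=0$ one extracts $\int_\Omega w_x\ge\tfrac12\eta\theta$, hence $\|w_x\|_{L^2(\Omega)}\ge\tfrac12\eta\sqrt\theta$, and also $\int_\Torus u_x^2=\int_{\Torus\setminus\Omega}u_x^2\in[\eta,3\eta]$. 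It therefore suffices to prove the \emph{geometry-independent coupling estimate}
\[
\Big(\int_\Torus|u_{xx}|^2\Big)\,\|w\|_{L^2(\Omega)}^2\;\gtrsim\;\frac{\eta^3\theta^3}{1-\theta},
\]
for then weighted AM--GM applied to $h^3\int|u_{xx}|^2+\alpha_s\|w_x\|_{L^2(\Omega)}\|w\|_{L^2(\Omega)}\gtrsim h^3\int|u_{xx}|^2+\alpha_s\eta\sqrt\theta\,\|w\|_{L^2(\Omega)}$ yields the second alternative in \eqref{ineq:lowerbd1d} (indeed with a slightly better power of $\theta$), and the minimum of the two cases is the stated bound.

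\emph{Lower bound (i): the coupling estimate.} Write $\Omega=\bigcup_kI_k$ as the union of its connected components (the bonded intervals) and $\Torus\setminus\Omega=\bigcup_kJ_k$ as its complement (the blisters), alternating cyclically. On each $J_k$ one has $u=u_x=0$ at both endpoints (they are density points of $\Omega$ and $u\in H^2$), so the clamped-beam Poincar\'e inequality gives $\int_{J_k}u_x^2\le|J_k|^2\int_{J_k}|u_{xx}|^2$; moreover the change of $w$ across $J_k$ is $\int_{J_k}w_x=\eta|J_k|-\tfrac12\int_{J_k}u_x^2$, up to a membrane-error term. Since $w_x\approx\eta>0$ on $\Omega$, $w$ increases at rate $\approx\eta$ along $\Omega$; but by Chebyshev, if $\|w\|_{L^2(\Omega)}$ is small then $w$ lies in the band $\{|w|\lesssim\|w\|_{L^2(\Omega)}/\sqrt\theta\}$ on a subset of $\Omega$ of measure $\gtrsim\theta$, so this band must be traversed at least $\sim\eta\theta^{3/2}/\|w\|_{L^2(\Omega)}$ times, and each traversal must be matched by a net descent of $w$, which (since $w_x\approx\eta>0$ on $\Omega$) can only occur inside a blister with $\tfrac12\int_{J_k}u_x^2>\eta|J_k|$. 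Thus there are $\gtrsim\eta\theta^{3/2}/\|w\|_{L^2(\Omega)}$ such descending blisters, whose lengths sum to at most $1-\theta$; applying $\int_{J_k}|u_{xx}|^2\ge\int_{J_k}u_x^2/|J_k|^2\ge 2\eta/|J_k|$ and the Cauchy--Schwarz inequality $\sum1/|J_k|\ge(\#)^2/(1-\theta)$ gives $\int_\Torus|u_{xx}|^2\gtrsim\eta\,(\eta\theta^{3/2}/\|w\|_{L^2(\Omega)})^2/(1-\theta)$, which is exactly the coupling estimate. An equivalent and perhaps more systematic route is to reduce to a finite-dimensional minimization in the data $\{|I_k|,\,|J_k|,\ \text{midpoint values of }w\text{ on }I_k\}$, subject to $\sum|I_k|=\theta$, $\sum|J_k|=1-\theta$, $\sum\int_{J_k}u_x^2=2\eta$ and the periodicity relation linking the midpoint values to the $\int_{J_k}u_x^2$'s, with the inequalities $\int_{J_k}u_x^2\le|J_k|^2\int_{J_k}|u_{xx}|^2$ built in, and to solve it by convexity.

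\emph{Anticipated main obstacle.} Everything except the coupling estimate is routine (the two constructions, the choice of length scale in (iii), the membrane dichotomy, the clamped Poincar\'e inequality, the AM--GM reduction). The coupling estimate is where the real work lies: one must make the ``counting of descending blisters'' rigorous while (a) allowing arbitrarily fine decompositions of $\Omega$ (super-fragmented configurations, where the per-interval variance bounds for $\|w\|_{L^2(\Omega)}$ carry large relative errors), (b) controlling the membrane-error terms, which a priori could concentrate on a few short bonded intervals, and (c) ruling out the use of ``trivial'' blisters, on which $u\equiv0$ and which contribute nothing to the bending, to reduce $\|w\|_{L^2(\Omega)}$. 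I expect carrying the error terms through the Chebyshev/band argument above --- or, alternatively, solving the finite-dimensional optimization in full generality --- to be the technical heart of the proof.
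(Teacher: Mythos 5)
Your upper bounds (ii) and (iii) match the paper's essentially exactly: the trivial test function, the single large blister, and the periodic array with balanced bending and substrate energies, with the length scale determined in the same way and condition \eqref{cond:1D} interpreted identically.

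Your lower bound, however, takes a genuinely different route. The paper never isolates a ``coupling estimate''; instead it introduces a geometric bookkeeping device (``good'' blisters, defined by $\max|u_x|>\sqrt{\eta\theta}/K$, and ``interim regions'' between them), and within each interim region it runs a local case split on whether $\int_{L_i}|w_x+\tfrac12 u_x^2-\eta|^2$ is above or below a threshold $M_i$ chosen in terms of $d_i/L_i$. In the low-membrane regions it shows directly that the localized substrate term is $\gtrsim\alpha_s\eta^2 d_i^2$, and then the whole lower bound is finished by two applications of Cauchy--Schwarz ($N^2\le(1-\theta)\sum1/l_i$ for bending, $\theta/4\le N^{1/2}(\sum d_i^2)^{1/2}$ for the substrate) followed by an optimization over the \emph{number} $N$ of good blisters. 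Your plan instead fixes a single global membrane dichotomy, keeps the substrate term as the product $\|w_x\|_{L^2(\Omega)}\|w\|_{L^2(\Omega)}$, and reduces everything to a single geometry-independent inequality $\bigl(\int|u_{xx}|^2\bigr)\|w\|_{L^2(\Omega)}^2\gtrsim\eta^3\theta^3/(1-\theta)$ closed by AM--GM. This is more modular and, if it works, a little stronger (you get $\theta^{4/3}$ in place of the paper's $\theta^{5/3}$, since $\theta\le1$). What the paper's localization buys is precisely that it avoids the level-set counting you are attempting: the substrate contribution is estimated locally, interim region by interim region, and only the \emph{number} of good blisters has to be controlled, which is done by Cauchy--Schwarz rather than by a band-traversal argument.

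The one place where I would push back is that your ``coupling estimate'' is not a proof yet, and the band-crossing counting --- the step where you claim ``each traversal must be matched by a net descent $\ldots$ which can only occur inside a blister with $\tfrac12\int_{J_k}u_x^2>\eta|J_k|$'' --- is exactly where the paper's more elaborate bookkeeping is doing the work. Two issues you flag are real: the descent between consecutive visits to the band can be arbitrarily small (so a visit does not automatically produce a \emph{qualifying} descending blister), and the membrane error $\int_\Omega(w_x)_-$ as well as the per-blister error $e_k:=\int_{J_k}(w_x+\tfrac12 u_x^2-\eta)$ can be negative and eat into the lower bound $\int_{J_k}u_x^2\ge 2\eta|J_k|$ that you need for the clamped Poincar\'e step. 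You also need $u_x$ to vanish at blister endpoints, which requires first passing to the perfect kernel of $\Omega$ (or a one-sided variant at the endpoints that are accumulation points). None of these look fatal --- the total $\Omega$-error is $O(\sqrt{\epsilon_0}\eta\theta^{3/2})$, which is small compared with $\eta\theta$ and should let you salvage a positive fraction of the visits --- but the argument as written is a plan, not a proof, and you say so yourself. The paper's ``good blister'' device plus the two Cauchy--Schwarz inequalities is a cleaner way to close the same gap, and it may be worth trying whether your coupling estimate can be proven \emph{by} that device rather than by the Chebyshev/traversal argument.
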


\begin{remark}\textbf{Physical interpretation:} Assuming for simplicity $\theta \sim \frac{1}{2}$, the physical meaning of the theorem is as follows: when $\eta \lesssim \alpha_s^{2}$, then either no blistering or one large blister is expected, whereas if $\eta \gtrsim \alpha_s^{2}$ then a periodic array of blisters is expected. Condition \eqref{cond:1D} ensures that at least one period fits inside the unit interval and should be seen as a smallness condition on $h$. See Remark \ref{rmk:2dphase} below for a brief discussion of the extreme values of $\theta$. 
\end{remark} 

\begin{remark}\textbf{Characteristic length scale:} The optimal length scale $l_1$ defined by \eqref{cond:1D} is determined by the competition between bending energy in the blisters and elastic energy in the substrate. 
\end{remark}

In two dimensions, our upper and lower bounds do not agree in their dependence on the parameters of $\alpha_s$ and $\eta$. 
The presence of $\alpha_m$ in \eqref{ineq:upperbound2d2} below indicates that the membrane energy is playing a leading order role; this is the reason the upper and lower bounds do not agree. 
Our upper bound is thus quite different from that known for the totally bonded case in the wrinkled regime, where the membrane energy is higher order and does not enter into the optimal scaling law \cite{KohnNguyen12}.
Since our upper and lower bounds scale differently with respect to $\eta$ and $\alpha_s$, at least one must be suboptimal, however doing better seems to require a new idea. 
We discuss in Section \ref{sec:2DUpper} the relationship between the 2D blistering problem and the problem of confining thin elastic sheets, also called the crumpling problem \cite{ContiMaggi08}; but we remark here that it is not currently known how to obtain matching upper and lower bounds on most problems of this type. 
The upper bound \eqref{ineq:upperbound2d2} is proved using a two-dimensional lattice of blisters.  
The construction involves the use of `minimal ridges', studied for example in \cite{Lobkovsky96,LobkovskyWitten97,Venkataramani03,AudolyPomeau}; in particular we use a F\"oppl-von K\'arm\'an analogue of the fully nonlinear construction of Conti and Maggi \cite{ContiMaggi08}.

\begin{theorem}[Two Dimensions] \label{thm:2d}
In two dimensions, \eqref{def:VarP} satisfies
\begin{itemize} 
\item[(i)] the lower bound, 
\begin{align} 
E_{h,\alpha_s,\eta,\theta} \geq K_4\min\left(\alpha_m \eta^2 \theta^3, \alpha_s^{2/3}\eta^{5/3} \theta^{8/3}  \right) h, \label{ineq:lowerbound2d}
\end{align}
for some constant $K_4 > 0$. 
\item[(ii)] the upper bound  
\begin{align} 
\min\left(\alpha_m \eta^2, \alpha_m \eta^2 \theta + K_5(\theta) \alpha_m \eta^{3/2}h \right) h \geq E_{h,\alpha_s,\eta,\theta}, \label{ineq:upperbound2d1}
\end{align}
where $K_5(\theta) > 0$ depends only on $\theta$.
The first upper bound is exhibited by $u = w = 0$. The second is exhibited by the constructions of \cite{JinSternberg01,BelgacemContiSimoneMuller00}, in which a single large blister exhibiting fine-scale wrinkles near the blister edge is chosen with $u = w = 0$ in the bonded region. 
\item[(iii)] If we have
\begin{align}
l_2 := \frac{\alpha_m^{1/16} h}{\eta^{5/16} \alpha_s^{3/8}}  & < c_2(\theta), \label{cond:2d1a}  \\ 
\frac{h}{\sqrt{\alpha_m \eta}} & < c_1(\theta)l_2 \label{cond:2d1b} 
\end{align}
and
\begin{align}
\eta < c_3(\theta)\frac{\alpha_s^{2/17}}{\alpha_m^{3/17}} \label{cond:2d2}
\end{align}
where $c_1,c_2,c_3 > 0$ depend only on $\theta$, then \eqref{def:VarP} also satisfies the upper bound 
\begin{align} 
K_6(\theta)\alpha_m^{1/16} \alpha_s^{5/8} \eta^{27/16} h \geq E_{h,\alpha_s,\eta,\theta},  \label{ineq:upperbound2d2}
\end{align}
where $K_6 > 0$ depends only on $\theta$. 
This upper bound is exhibited by a periodic lattice of blisters and bonded regions with characteristic length-scale $\sim l_2$.  
\end{itemize}
\end{theorem}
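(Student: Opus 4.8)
The plan is to prove (i) by a one–dimensional slicing argument, (ii) by two explicit competitors, and (iii) --- the substantive case --- by constructing a periodic lattice of tent–shaped blisters built from F\"oppl--von K\'arm\'an minimal ridges. For (i): given any admissible $(w,u,\Omega)$ and each $x_2$, restrict to the slice $\Torus\times\{x_2\}$; the bending term controls $h^3\int|\partial_1^2 u|^2$ slice by slice, the membrane term controls the one–dimensional membrane energy of $(w_1(\cdot,x_2),u(\cdot,x_2))$ through its $(1,1)$ component, and since $\int_\Omega|\grad w|^2\geq\int_\Omega|\partial_1 w|^2$ together with Cauchy--Schwarz in $x_2$, the two–dimensional substrate term dominates $\int_\Torus$ of the one–dimensional substrate energy of each slice. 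Hence $E_{ND}$ is bounded below by the $x_2$–average of the one–dimensional energy with bonded fraction $\theta(x_2)=|\Omega\cap(\Torus\times\{x_2\})|$; since $\int\theta(x_2)\,dx_2=\theta$, a Markov–type estimate gives $\theta(x_2)\geq\theta/2$ on a set of measure $\geq\theta/2$, and applying Theorem~\ref{thm:oneDLoc}(i) there and integrating produces the extra power of $\theta$ in (\ref{ineq:lowerbound2d}). For (ii): the first bound is $u=w=0$, leaving only the membrane term $2\alpha_m\eta^2 h$; for the second, take the blistered region to be one square of area $1-\theta$, set $u=w=0$ on its complement $\Omega$ (so the substrate term vanishes identically and the membrane term contributes $2\alpha_m\eta^2\theta h$), and inside the blister insert the single–blister construction of \cite{JinSternberg01,BelgacemContiSimoneMuller00} with $u=0$, $\grad u=0$ on the blister boundary, whose membrane–plus–bending energy has the scaling of the second term in (\ref{ineq:upperbound2d1}), with $K_5(\theta)$ from the blister geometry.

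The heart of the proof is (iii). Tile $\Torus^2$ into $\sim l^{-2}$ congruent cells of side $l$ (to be optimized), each split into a bonded region of area fraction $\theta$ and a blister of area fraction $1-\theta$, with the bonded components arranged (e.g.\ meeting only at isolated cell corners) so that they carry effectively independent in–plane displacements. On each bonded component set $u=0$ and $w=\eta\,\mathbf{x}+(\text{rigid motion})$, so that $e(w)=\eta I$ and the membrane term vanishes there; the rigid motions are chosen cell by cell so that $w$ is periodic, which forces $w$ to oscillate with amplitude $\sim\eta l$ and to ``fold back'' inside the blisters. In each blister put a small–slope developable ``tent'' for $u$ --- piecewise planar with slope $\sim\sqrt\eta$ and height $\sim\sqrt\eta\,l$ --- chosen so that $\int\frac12|\grad u|^2$ over the cell absorbs the excess area $\sim\eta l^2$ in each coordinate direction; this is precisely what makes the membrane strain $e(w)+\frac12\grad u\otimes\grad u-\eta I$ vanish away from the fold lines, since on each planar facet $\frac12\grad u\otimes\grad u$ is a constant symmetric matrix, hence an admissible linearized strain, and the jump of $\frac12\grad u\otimes\grad u$ across a crease (where $u$ is continuous and only $\grad u$ jumps) automatically satisfies the Hadamard condition, so the facetwise in–plane fields glue into a single $H^1$ map.

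Each sharp crease is then replaced by a F\"oppl--von K\'arm\'an minimal ridge of width $w_r$: the bending energy ($\sim h^3(\sqrt\eta/w_r)^2\, l\,w_r$ per ridge) balances the residual membrane energy of the regularized, slightly non-developable layer, and the Lobkovsky--Witten optimization gives $w_r\sim l^{2/3}(h/\sqrt{\alpha_m})^{1/3}\eta^{-1/6}$ with per–ridge energy $\sim\alpha_m^{1/6}h^{8/3}\eta^{7/6}l^{1/3}$; the ``lips'' connecting the tent to the flat bonded region are handled as ridges of the same type. Summing over the $\sim l^{-2}$ cells, the bending–plus–membrane energy is $\sim\alpha_m^{1/6}h^{8/3}\eta^{7/6}l^{-5/3}$, while the substrate term --- an integral over the bonded set only, where $|\grad w|\sim\eta$ and $|w|\sim\eta l$ --- is $\sim\alpha_s\eta^2 l$ (up to a $\theta$–dependent constant); these balance at $l\sim l_2$, yielding (\ref{ineq:upperbound2d2}). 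Condition (\ref{cond:2d1a}) guarantees at least one period fits in $\Torus^2$; condition (\ref{cond:2d1b}) is equivalent to $w_r\ll l_2$, so the minimal–ridge scaling is valid and ridges within a cell do not overlap; condition (\ref{cond:2d2}) is the further smallness that places the construction in the regime where the leading balance is between the substrate energy and the minimal–ridge energy, with the remaining contributions (the vertex/d-cone singularities where fold lines meet, the transition layers, and the cross terms in the membrane energy) genuinely lower order.

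I expect the main obstacle to be the construction and estimation of the F\"oppl--von K\'arm\'an minimal ridge together with the global assembly: one must exhibit a tent pattern whose creases regularize to ridges of width exactly $w_r$ carrying the claimed energy, verify that the origami vertices close up (so that going around each vertex the rigid motions of the in–plane field have trivial monodromy, as for a d-cone), and check globally that $u\in H^2$ with $u\geq 0$, $w\in H^1$, and that no uncontrolled bulk membrane energy is created --- in particular that the bonded region can be laid out with effectively independent in–plane displacements on its components. Everything else --- the slicing for (i), the two competitors for (ii), and the final optimization in $l$ --- is comparatively routine bookkeeping.
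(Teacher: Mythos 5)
Your plan follows the same route as the paper in all three parts: for (i) a one--dimensional slicing plus a Markov--type selection of good slices followed by Theorem~\ref{thm:oneDLoc}(i); for (ii) the trivial competitor $u=w=0$ and the single--blister construction of \cite{JinSternberg01,BelgacemContiSimoneMuller00} with $u=w=0$ on $\Omega$; for (iii) a periodic cell of side $l$, a piecewise--developable zero--membrane ``origami'' in the blister, a F\"oppl--von K\'arm\'an minimal ridge along each crease, a vertex regularization, and a balance between the substrate energy $\sim\alpha_s\eta^2 l$ and the aggregated ridge energy $\sim\alpha_m^{1/6}\eta^{7/6}h^{8/3}l^{-5/3}$, yielding $l\sim l_2$ and the stated bound. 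All of your scalings (the ridge width $w_r\sim l^{2/3}h^{1/3}\alpha_m^{-1/6}\eta^{-1/6}$, the per--ridge energy $\sim\alpha_m^{1/6}h^{8/3}\eta^{7/6}l^{1/3}$, the optimal $l_2$ and final energy) agree with the paper.

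You are also right about where the unfinished work lies, and it is substantial rather than routine: it is precisely the content of the paper's Step~1 explicit corner construction (in particular the nontrivial fact that the fold pattern closes up around the interior vertex, which forces the specific choice $d=(\sqrt2-1)^2$), Lemma~\ref{lem:MinRidge} (a genuine adaptation of Conti--Maggi, since the F\"oppl--von K\'arm\'an energy is not rotation--invariant and the analogue of the unit--speed condition, $\gamma_2'+\tfrac12(\gamma_3')^2=1$, forces a modified definition of $\beta$ with the correction term $\frac{y+f}{2}f'\omega(1)$ and the accompanying bound $|E|\lesssim\phi^4$), Lemma~\ref{lem:gammachoice}, and the Step~3 vertex smoothing with the cutoff $\rho(|x-v_i|/\sigma)$. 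One small correction of emphasis: condition~\eqref{cond:2d2} does not primarily encode that ``remaining contributions are lower order'' in the overall optimization; it is the translate, via $\phi\sim\sqrt\eta$ and $\sigma\sim h/(\sqrt{\alpha_m\eta})$, of the hypothesis~\eqref{cond:inplane} inside Lemma~\ref{lem:MinRidge}, which ensures the F\"oppl--von K\'arm\'an shear cross--term $\phi^8\sigma l$ of the ridge is dominated by the longitudinal membrane term $\phi^4\sigma^{5/3}l^{1/3}$ --- i.e., it is an internal consistency condition of the small--slope ridge construction, not of the cell--level balance.
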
 
\begin{remark} \label{rmk:2dphase} \textbf{Physical interpretation:} The physical meaning of the theorem can be explained as follows, first assuming $\theta \sim 1/2$. 
If $\eta \lesssim \alpha_s^2$ then the upper bound \eqref{ineq:upperbound2d1} is optimal. Moreover, this is precisely the case where \eqref{cond:2d1b} fails. Hence in the region $\eta \lesssim \alpha_s^2$, either one large blister or no blisters at all are expected (or at least, achieve the optimal scaling law). 
If $\alpha_s^2 \lesssim \eta \lesssim \alpha_s^{2/17}$ then, if $h$ is sufficiently small (the condition \eqref{cond:2d1a}), the upper bound in \eqref{ineq:upperbound2d2} improves on \eqref{ineq:upperbound2d1} and a blister web is expected. 
If $\eta \gtrsim \alpha_s^{2/17}$ then \eqref{cond:2d2} fails. 
In this case, the terms dropped from the nonlinear model to obtain the F\"oppl-von K\'arm\'an energy are not negligible in our minimal ridge construction (Lemma \ref{lem:MinRidge}). 
Hence, physically speaking \eqref{cond:2d2} can be likened to a modeling requirement that ensures that the F\"oppl-von K\'arm\'an model can be used.  
The condition \eqref{cond:2d1a} is necessary to ensure that the lattice fits into the unit periodic square (analogous to \eqref{cond:1D}) whereas \eqref{cond:2d1b} is necessary to ensure that the minimal ridges which obtain the optimal scaling fit within the geometric confinements of the construction. 
See Figure \ref{fig:Phase2d} for a graphical depiction of the situation. 

We have not carefully quantified all of the constants with respect to the limits $\theta \rightarrow 0$ and $\theta \rightarrow 1$, however, we may still make some comments. 
In the limit $\theta \rightarrow 0$, the blistering problem reduces to a type of crumpling problem such as that studied in \cite{ContiMaggi08}. 
Indeed, despite being totally debonded in that limit, the film is still constrained by the periodic boundary conditions and is expected to crumple in a manner similar to that considered in \cite{ContiMaggi08}, for example with a Miura-ori pattern.
Since these problems are widely expected to scale with energy $O(h^{8/3})$ we expect that for sufficiently small $\theta$ the film may not form blister webs.
If $\theta \rightarrow 1$ then the film should be expected to have no incentive to appreciably debond but the above theorem does not make this precise. \begin{center}
\begin{figure}[hbpt] 
\begin{picture}(150,200)(-100,0)
\scalebox{.5}{\includegraphics{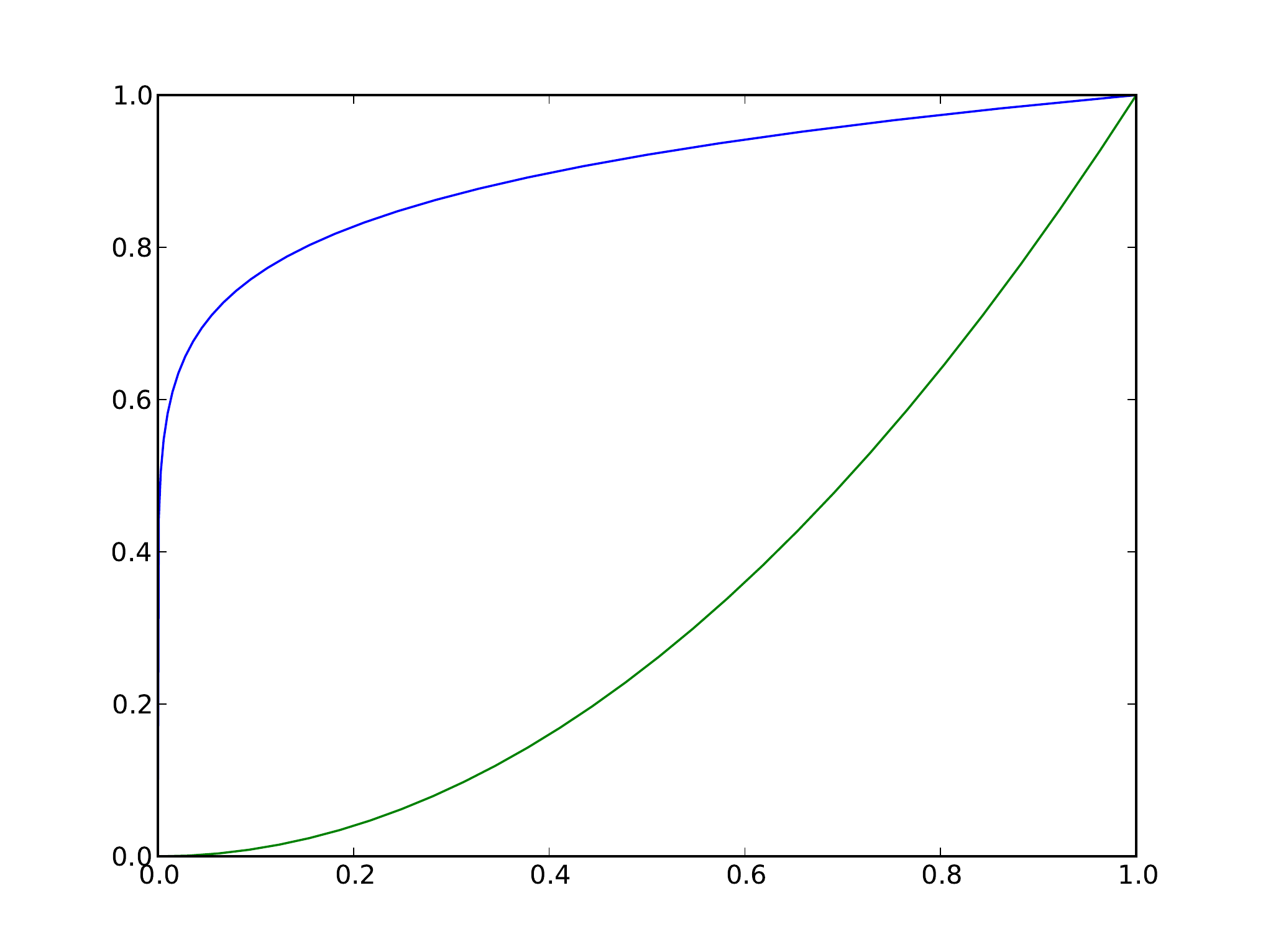}}
\put(-275,110){$\eta$}
\put(-150,10){$\alpha_s$}
\put(-240,170){$A$}
\put(-100,50){$C$}
\put(-180,110){$B$}
\end{picture}
\caption{A graphical depiction of the situation described by Theorem \ref{thm:2d} in the case $\theta \sim \frac{1}{2}$. In the region (C), Theorem \ref{thm:2d} (i) and (ii) show that no blistering or a single large blister achieves the optimal scaling law. In the region (B) between the two curves, Theorem \ref{thm:2d} (iii) shows that a lattice-like arrangement of blisters can attain lower energies than large isolated blisters (but does not show that this arrangement achieves the optimal scaling law in this regime).
In region (A), the construction of (iii) no longer obtains the scaling \eqref{ineq:upperbound2d2} in the F\"oppl-von K\'arm\'an setting, and a fully nonlinear model might be more appropriate. } \label{fig:Phase2d}
\end{figure}
\end{center} \end{remark} 
\begin{remark}\textbf{Characteristic length scale:} The optimal length scale $l_2$ set in \eqref{cond:2d1a} is determined by the competition between the elastic energy in the substrate and the energy of a crumpling-type problem in the blisters, which itself involves both the bending and membrane energy. We also remark that the majority of the crumpling energy arises where the horizontal and vertical components of the blister lattice cross, as it is here that the two dimensional nature of the problem becomes crucial; see Section \ref{sec:2DUpper}.  
\end{remark} 

\begin{remark}\textbf{Experiments:} 
In the 1D case, periodic arrangements very similar to that used to prove \eqref{ineq:upperbound1d2} have been observed in experiments of one-dimensional blistering \cite{VellaEtAl09}.
The the characteristic length scale of these periodic patterns and of 2D blister webs are observed in experiments to depend linearly on $h$, consistent with our work (see e.g. \cite{MoonJensenEtAl02,YuZhangChen09}). 
\end{remark}

\begin{remark} \label{rmk:CompBond} \textbf{Comparison with bonded case:} It is useful to compare Theorem \ref{thm:2d} with what is already known.
Consider first the case of a totally bonded film, $\theta = 1$. 
In \cite{AudolyBoudaoudIII08}, the authors discuss the so-called Miura-ori pattern which exhibits an energy scaling law of $E_{ND} = O(\alpha_m^{1/16}\alpha_s^{5/8}\eta^{17/16})$, larger than \eqref{ineq:upperbound2d2} just by a power of $\eta$ (similar to that seen in the 1D calculation sketched in Section \ref{sec:FvK}). 
The test function we employ to prove \eqref{ineq:upperbound2d2} involves a ridge construction like that used in \cite{AudolyBoudaoudIII08}, however the energy arising from the substrate is lower (by a power of $\eta$) than the energy in the completely bonded case, since our test function does not buckle out-of-plane in the bonded regions.  
Since $\eta$ is the small mismatch strain, this confirms the intuition that debonding allows the film to sustain lateral compression with an energy much smaller than that of a bonded film.
\end{remark} 

\begin{remark}\textbf{Solitary blisters: } The second possibility in the upper bound \eqref{ineq:upperbound2d1} arises from the construction of \cite{JinSternberg01,BelgacemContiSimoneMuller00}. 
It involves a fixed $\Omega$ (independent of the small parameters) with no deformation in the bonded region (so there is no substrate energy). 
As $h \rightarrow 0$ the blister develops fine wrinkles along its boundary due to the inconsistency between the membrane energy and the compression at the edge of the blister.
Such behavior is expected in the case of a relatively stiff substrate. 
\end{remark} 

\section{One Dimension} 
\subsection{Upper Bound}\label{sec:UppBd1D}
\begin{proof}(Theorem \ref{thm:oneDLoc} (ii),(iii))
We first prove the upper bounds stated in (ii) and (iii) as these are relatively straightforward and the upper bound construction of (iii) will provide helpful intuition for the proof of (i). 
For the first upper bound in \eqref{ineq:upperbound1d1}, it suffices to consider the trivial test function $u = w = 0$. 
The second upper bound in \eqref{ineq:upperbound1d1} follows by choosing $u = w = 0$ in an interval of length $\theta$ and choosing a smooth blister with zero membrane energy in the remaining interval of length $1-\theta$. 
To be precise, it suffices to take 
\begin{align*}
w(x) & = \left\{
\begin{array}{lr}
\frac{\eta(1-\theta)}{4\pi}\sin\left(\frac{4\pi x}{1-\theta}\right) & 0 \leq x \leq 1-\theta \\ 
0 & 1-\theta < x \leq 1
\end{array}
,\right. \\ 
u(x) & = \left\{
\begin{array}{lr}
\frac{2\sqrt{\eta}(1-\theta)}{\pi}\sin^2\left(\frac{\pi x}{1-\theta}\right) & 0 \leq x \leq (1-\theta) \\ 
0 & 1-\theta < x \leq 1
\end{array} 
.\right. 
\end{align*}

Now we turn to the upper bound in \eqref{ineq:upperbound1d2}.   
Divide the interval into $l^{-1}$ regions of length $l$, for a scale to be chosen later.  
On intervals of length $\theta l$ take $u = 0$ and $w = \eta x - x_i$ where $x_i$ is the center of the interval. 
On the intervals of length $(1-\theta)l$ left over, we place a blister which achieves zero membrane energy and finite bending energy. 
There is a large amount freedom in choosing suitable configurations (a fact which we exploit later in Lemma \ref{lem:MinRidge}), but the present situation is so simple we can make an explicit choice: 
\begin{align*} 
w_1(x) & = \left\{ 
\begin{array}{lr}
\eta\left(1 - \frac{1}{1-\theta}\right)x + \frac{\theta \eta}{2} + \frac{\eta}{4\pi}\sin\left(\frac{4\pi x}{1-\theta}\right)  & 0 \leq x \leq 1-\theta \\ 
\eta \left(x - \frac{2-\theta}{2}\right) & 1-\theta < x \leq 1
\end{array}
,\right. \\  
u_1(x) & = \left\{ 
\begin{array}{lr}
2\sqrt{\frac{\eta}{1-\theta}}\left(\frac{1-\theta}{2\pi}\right)\left[1 - \cos\left(\frac{2\pi x}{1-\theta}\right)\right] & 0 \leq x \leq 1-\theta \\ 
0 & 1-\theta < x \leq 1
\end{array} 
,\right. \\ 
\Omega_1 &= [1-\theta,1], \\  
w(x) & = lw_1\left(\frac{x}{l}\right), \\  
u(x) & = lu_1\left(\frac{x}{l}\right), \\ 
\Omega_l & = [(1-\theta)l,l], 
\end{align*} 
with $w,u$ and $\Omega_l$ extended periodically over $[0,1]$.

See figure \ref{fig:Periodic1D} for a rough depiction of this function. 
\begin{center}
\begin{figure}[hbpt] 
\begin{picture}(50,50)(-25,0)
\put(125,5){$\theta l$}
\put(190,40){$(1-\theta)l$}
\scalebox{1.00}{\includegraphics{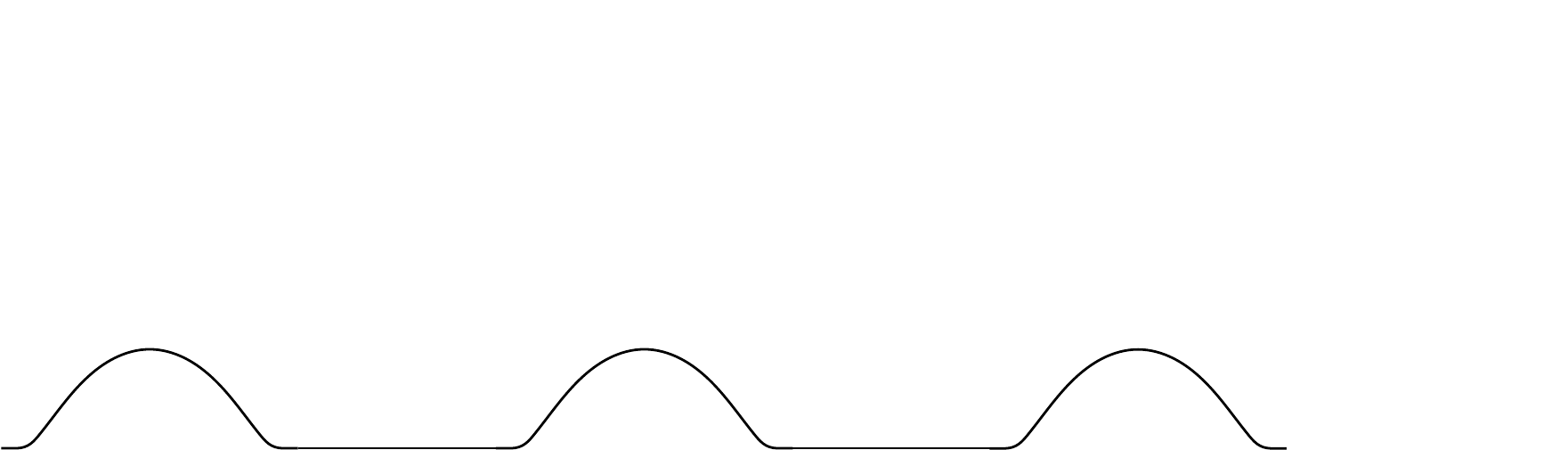}}
\end{picture}
\caption{A rough depiction of the out-of-plane displacement in the 1D periodic construction. The bonded regions are of length $\theta l$ while the blistered regions are of length $(1-\theta)l$.}
\label{fig:Periodic1D}
\end{figure}
\end{center}
One can check that the bending energy satisfies
\begin{equation*}
\int_0^{(1-\theta)l} \abs{u_{xx}}^2 dx \sim \frac{\eta}{(1-\theta)^2l}. 
\end{equation*}
Moreover, the localized substrate energy satisfies 
\begin{equation*}
\alpha_s \left( \int_{(1-\theta)l}^l \abs{w_x}^2 dx \right)^{1/2} \left( \int_{(1-\theta)l}^l \abs{w}^2 dx \right)^{1/2} \sim \alpha_s\eta^2 \theta^{2} l^2.
\end{equation*} 
Using $\sum a_jb_j = \left(\sum a_j^2\right)^{1/2}\left(\sum b_j^2\right)^{1/2}$ when $a_j \equiv a$ and $b_j \equiv b$ for all $j$, 
it follows since every bonded interval is the same that (also using that there are $l^{-1}$ intervals), 
\begin{align*} 
\alpha_s \left( \int_{\Omega} \abs{w_x}^2 dx \right)^{1/2} \left( \int_{\Omega} \abs{w}^2 dx \right)^{1/2} \sim \alpha_s\eta^2 \theta^{2} l.
\end{align*} 
Hence the total energy of the construction is estimated by
\begin{equation*}
E \sim h^3\frac{\eta}{(1-\theta)^2l^2} + \alpha_s \eta^2 \theta^2 l. 
\end{equation*}
Optimizing in $l$ implies
\begin{align*}
l \sim \frac{h}{\eta^{1/3} \alpha_s^{1/3}(1-\theta)^{2/3} \theta^{2/3}}
\end{align*}
and 
\begin{equation*}
E_{ND} \sim h \eta^{5/3} \alpha_s^{2/3} \frac{\theta^{4/3}}{(1-\theta)^{2/3}}. 
\end{equation*}
The condition \eqref{cond:1D} is simply that the predicted characteristic length-scale $l$ is smaller than the entire periodic interval of length one. Hence (iii) follows. 
\end{proof}   

\subsection{Lower Bound}
\begin{proof}(Theorem \ref{thm:oneDLoc} (i))
We now turn to the lower bound \eqref{ineq:lowerbd1d}, which is the most difficult statement in Theorem \ref{thm:oneDLoc}, mainly due to the ambiguity in the geometry of the bonded region.
To reduce clutter of notation, in the following proof we regularly refer to intervals and their length interchangeably. 
Consider an arbitrary deformation $(w,u) \in \mathcal{A}$ (defined in \eqref{def:A}).  
Notice that since $\int \abs{u_{xx}}^2 dx < \infty$, $u$ is continuously differentiable and since $\int \abs{w_x}^2 dx < \infty$, $w$ is continuous.   
We refer to the open debonded intervals as `blisters'. 
Label a blister of length $l_i$ as `good' if $\max_{x \in l_i} \abs{u_x} > \sqrt{\eta\theta}/K$, 
where
\begin{align} 
K = \sqrt{8\sqrt{12}}. \label{def:K}
\end{align} 
The reason for this choice of $K$ is technical and will be apparent later in the proof.  
Since, for $x \in l_i$, 
\begin{align*} 
\abs{u_x(x)} \leq \int_{l_i} \abs{u_{xx}(y)} dy \leq l_i^{1/2} \left(\int_{l_i} \abs{u_{xx}(y)}^2 dy\right)^{1/2}, 
\end{align*}
it follows that the bending energy in good blisters is bounded below by $\eta h^{3} \theta K^{-2} l_i^{-1}$. 
Hence, although there may be infinitely many blisters, there can only be finitely many good blisters, as otherwise the total energy would be infinite. 
Now consider the $N$ interim regions of length $L_i$ which separate the good blisters. On these intervals, the energy contribution is bounded below by
\begin{align*}
E_i = E_{ND}[u|_{L_i},w|_{L_i}] \geq \alpha_mh \int_{L_i} \abs{w_x + \frac{1}{2}\partial_x u^2 - \eta}^2 dx + \alpha_s \norm{w_x}_{L^2(\Omega \cap L_i)}\norm{w}_{L^2(\Omega \cap L_i)}. 
\end{align*} 
Denote by $d_i$ the total length of the bonded region in the $i-$th interim region. 
It is not necessarily the case that $d_i = L_i$; indeed there may be infinitely many blisters in the interim region. 
See Figure \ref{fig:NonPeriodic1D} for a diagram of this decomposition, and compare against Figure \ref{fig:Periodic1D}. 

\begin{center}
\begin{figure}[hbpt] 
\begin{picture}(50,75)(-25,0)
\put(45,15){$l_1$}
\put(90,5){$L_1$}
\put(117,15){$l_2$}
\put(230,5){$L_2$}
\put(365,15){$l_3$}
\scalebox{1.00}{\includegraphics{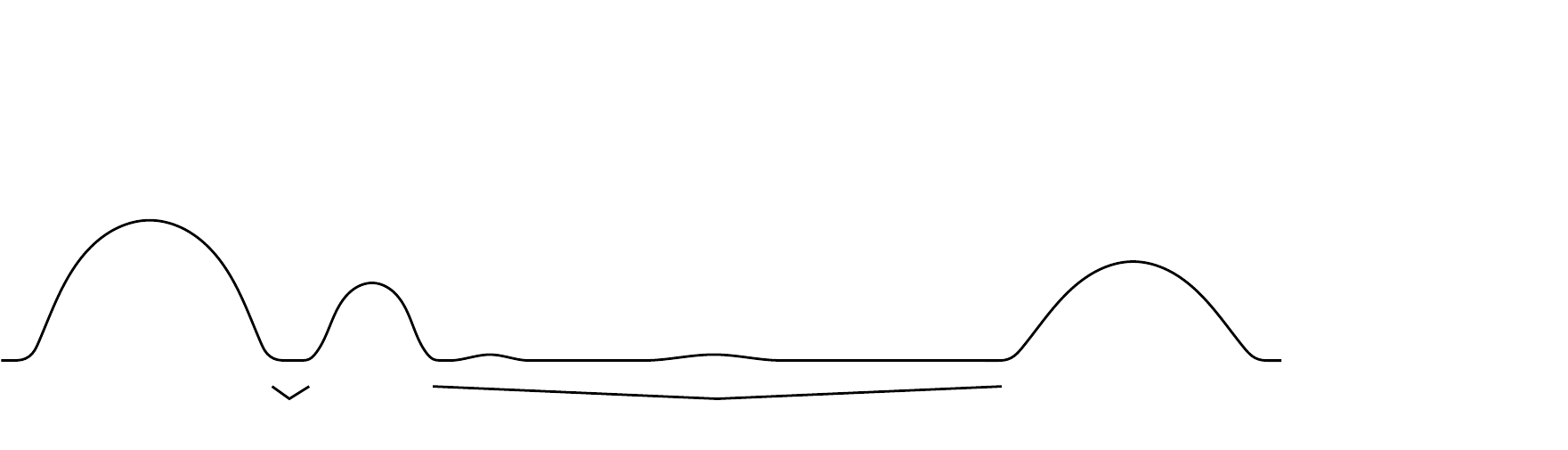}}
\end{picture}
\caption{A depiction of a typical out-of-plane displacement in the 1D lower bound. Notice that $L_2$, the interim region between good blisters $l_2$ and $l_3$, is not totally bonded. Compare against Figure \ref{fig:Periodic1D}.}
\label{fig:NonPeriodic1D}
\end{figure}
\end{center}

Let $M_i >0$ be fixed later (depending on $i$) to be the following, for reasons which will be apparent later: 
\begin{align} 
\sqrt{M}_i = \frac{\eta d_i}{2\sqrt{12 L_i}}. \label{ineq:Mi2} 
\end{align} 
We will divide into two cases. 
The first is when at least $\theta/2$ of the bonded region is contained in 
interim regions such that $\int_{L_i} \abs{w_x + \frac{1}{2}u_x^2 - \eta}^2 dx < M_i$ and the second is when at least $\theta/2$
of the bonded region is contained in interim regions such that $\int_{L_i} \abs{w_x + \frac{1}{2}u_x^2 - \eta}^2 dx \geq M_i$.

{\it Case 1: $\theta/2$ of bonded region in interim regions such that $\int_{L_i} \abs{w_x + \frac{1}{2}u_x^2 - \eta}^2 dx < M_i$:} \\
This is the more difficult case, and roughly corresponds to a large fraction of the bonded region having low membrane energy (but is not exactly equivalent). 
It further follows that $d_i \geq \theta L_i/4$ in a set of interim regions which contains at least $\theta/4$ worth of the bonded region we are considering (as otherwise the length of the interim regions would total more than one). 
Consider now only these $M \leq N$ interim regions in which $\int_{L_i} \abs{w_x + \frac{1}{2}u_x^2 - \eta}^2 dx < M_i$ and $d_i \geq \theta L_i/4$ and ignore the others which we cannot control.  
Let $c_{i,h}$ be such that $w-\eta x - c_{i,h}$ is average zero over the $i$-th interim region and write $F_i(x) = \eta x + c_{i,h}$. 
Then, 
\begin{align*} 
\alpha_s\norm{w_x}_{L^2(\Omega \cap L_i)}\norm{w}_{L^2(\Omega \cap L_i)} & \geq \alpha_s\norm{\partial_xF_i}_{L^2(\Omega \cap L_i)}\norm{w}_{L^2(\Omega \cap L_i)} - \alpha_s\norm{w_x - \partial_xF_i}_{L^2(\Omega \cap L_i)}\norm{w}_{L^2(\Omega \cap L_i)} \\ 
& \geq \alpha_s\norm{w}_{L^2(\Omega \cap L_i)} \left(\eta d_i^{1/2} - \norm{w_x - \partial_xF_i}_{L^2(\Omega \cap L_i)} \right). 
\end{align*}
Since 
\begin{align*} 
\norm{w_x - \partial_x F_i}_{L^2(\Omega \cap L_i)} = \norm{w_x - \eta}_{L^2(\Omega \cap L_i)} \leq \sqrt{M_i},
\end{align*}
and \eqref{ineq:Mi2} implies since $d_i \leq L_i$,  
\begin{equation}
\sqrt{M}_i < \frac{1}{2}\eta d_i^{1/2},
\end{equation}
we have
\begin{align*} 
\alpha_s\norm{w_x}_{L^2(\Omega \cap L_i)}\norm{w}_{L^2(\Omega \cap L_i)}  & \geq \frac{\alpha_s}{2}\norm{w}_{L^2(\Omega \cap L_i)} \eta d_i^{1/2} \\ 
& \geq \frac{\alpha_s}{2}\eta d_i^{1/2}\left(\norm{F_i}_{L^2(\Omega \cap L_i)} - \norm{w - F_i}_{L^2(\Omega \cap L_i)} \right) \\ 
& \geq \frac{\alpha_s}{2} \eta d_i^{1/2}\left( \frac{1}{\sqrt{12}} \eta d_i^{3/2} - \norm{w - F_i}_{L^2(\Omega \cap L_i)} \right). 
\end{align*}
Since $w -F_i$ is absolutely continuous and mean-zero over $L_i$, 
\begin{align*} 
\norm{w-F_i}_{L^2(\Omega \cap L_i)} & \leq \sqrt{d_i} \max_{\Omega \cap L_i}\abs{w - F_i} \\ 
 & \leq \sqrt{d_i} \int_{L_i} \abs{w_{x} - \eta} dx \\ 
& \leq \sqrt{d_i L_i} \norm{w_x - \eta}_{L^2(L_i)}. 
\end{align*} 
By the triangle inequality and our assumptions on the membrane energy and the definition of good blister, 
\begin{align*} 
 \norm{w_x - \eta}_{L^2(L_i)} \leq \sqrt{M_i} + \frac{1}{2}\norm{u_x^2}_{L^2(L_i)} \leq \sqrt{M_i} + \frac{\eta \theta}{2K^2}\sqrt{L_i}. 
\end{align*} 
Therefore we have, 
\begin{align*} 
\alpha_s\norm{w_x}_{L^2(\Omega \cap L_i)}\norm{w}_{L^2(\Omega \cap L_i)} & \geq \frac{\alpha_s}{2}\eta d_i^{1/2}\left(\frac{1}{\sqrt{12}}\eta d_i^{3/2} - \sqrt{d_i L_i}\sqrt{M_i} - \frac{\eta \theta}{2K^2}\sqrt{d_i} L_i \right).  
\end{align*} 
By \eqref{ineq:Mi2}, 
\begin{align*} 
\alpha_s\norm{w_x}_{L^2(\Omega \cap L_i)}\norm{w}_{L^2(\Omega \cap L_i)} & \geq \frac{\alpha_s}{2}\eta d_i^{1/2}\left(\frac{1}{2\sqrt{12}} \eta d_i^{3/2} - \frac{\eta \theta}{2K^2}\sqrt{d_i} L_i\right).  
\end{align*} 
Since $d_i \geq L_i \theta/4$, on the interim regions we are considering, it follows that (and recalling our choice of $K$ \eqref{def:K}), 
\begin{align*} 
\alpha_s\norm{w_x}_{L^2(\Omega \cap L_i)}\norm{w}_{L^2(\Omega \cap L_i)} & \geq \frac{\alpha_s}{2}\eta d_i^{1/2}\left(\frac{1}{2\sqrt{12}} \eta d_i^{3/2} - \frac{2\eta}{K^2}d_i^{3/2}\right) \\ 
& \geq \frac{\alpha_s}{8\sqrt{12}}\eta^2 d_i^{2} = \frac{\alpha_s}{K^2}\eta^2 d_i^{2}.    
\end{align*}
Summing over all interim regions that satisfy our assumptions and all good blisters, the total energy is bounded below by (labeling the interim intervals of low membrane energy and large bonded region $I(i)$),  
\begin{align*} 
E_{ND}[w,u] \geq \frac{h^3 \eta \theta}{K^2}\sum_{i = 1}^N \frac{1}{l_i} + \frac{\eta^2\alpha_s}{K^2} \sum_{i = 1}^M  d_{I(i)}^{2}. 
\end{align*}
By our choice of interim regions and Cauchy-Schwarz, 
\begin{equation*}
\frac{\theta}{4} \leq \sum_{i = 1}^M d_{I(i)} \leq M^{1/2}\left(\sum_{i = 1}^M d_{I(i)}^{2} \right)^{1/2} \leq N^{1/2}\left(\sum_{i = 1}^M d_{I(i)}^{2} \right)^{1/2}, 
\end{equation*}
and 
\begin{equation*}
N^2 = \left(\sum_{i = 1}^N \frac{\sqrt{l_i}}{\sqrt{l_i}}\right)^2 \leq \left(\sum_{i = 1}^N l_i \right)\left(\sum_{i = 1}^N \frac{1}{l_i}\right) = (1-\theta)\sum_{i = 1}^N \frac{1}{l_i},  
\end{equation*}
which implies that the total energy is bounded below by
\begin{equation*}
E_{ND}[w,u] \geq h^3 \eta \frac{\theta N^2}{(1-\theta)K^2} + \frac{\alpha_s}{16 K^2} \eta^2 \theta^{2}N^{-1}. 
\end{equation*}
Optimizing in $N$ implies the energy estimate
\begin{equation*}
E_{ND}[w,u] \gtrsim \eta^{5/3} \alpha_s^{2/3} \frac{\theta^{5/3}}{(1-\theta)^{1/3}} h.
\end{equation*}
This proves the energy estimate in the first case. 

{\it Case 2: $\theta/2$ of bonded region in interim regions such that $\int_{L_i} \abs{w_x + \frac{1}{2}u_x^2 - \eta}^2 dx \geq M_i$:} \\
This case is easier and roughly corresponds to a large fraction of the bonded region having high membrane energy (but as above is not exactly equivalent).  
Arguing as we did in case 1, $d_{i}L_{i}^{-1} > \theta/4$ on $M \leq N$ of these interim intervals such that they include at least $\theta/4$ worth of the bonded region. 
Then, summing the membrane energy over these $M$ intervals, labeled $I(i)$, 
\begin{equation*}
E[w,u] \geq \alpha_m h\sum_{i = 1}^M M_{I(i)} = \alpha_m h \sum_{i = 1}^M \frac{\eta^2 d_{I(i)}^2}{48 L_{I(i)}} > \frac{\alpha_m h \eta^2 \theta}{192} \sum_{i = 1}^M d_{I(i)} > \frac{\alpha_m h \eta^2 \theta^2}{768}.
\end{equation*}
This completes the proof of the one dimensional lower bound \eqref{ineq:lowerbd1d}, concluding the proof of Theorem \ref{thm:oneDLoc}. 
\end{proof}

\section{Two Dimensions} \label{sec:2D}
\subsection{Lower Bound} 
\begin{proof}(Theorem \ref{thm:2d} (i))
The lower bound \eqref{ineq:lowerbound2d} follows easily from the one dimensional estimate \eqref{ineq:lowerbd1d}.
Define the projection onto the $x$ axis defined by $\pi(x) = (x_1,0)$ where $x = (x_1,x_2)$. 
We are assuming the bonded region $\Omega$ is closed and therefore $\pi^{-1}(x) \cap \Omega$ is measurable.  
By Fubini, 
\begin{align}
\theta  = \int_0^1 \abs{ \pi^{-1}(x) \cap \Omega} dx & = \int_{\set{x \in \Torus^1 : \abs{\pi^{-1}(x) \cap \Omega} > \theta/2}} \abs{ \pi^{-1}(x) \cap \Omega} dx \\ & \quad + \int_{\set{x \in \Torus^1 : \abs{\pi^{-1}(x) \cap \Omega} \leq \theta/2}} \abs{ \pi^{-1}(x) \cap \Omega} dx  \nonumber  \\
& \leq \int_{\set{x \in \Torus^1 : \abs{\pi^{-1}(x) \cap \Omega} > \theta/2}} \abs{ \pi^{-1}(x) \cap \Omega} dx + \theta/2 \nonumber  \\
& \leq \abs{\set{x \in \Torus^1 : \abs{\pi^{-1}(x) \cap \Omega} > \theta/2}} + \theta/2. \label{ineq:Econtrol}
\end{align}
That is, the set of $x$ such that $E:=\set{x \in \Torus^1 : \abs{\pi^{-1}(x) \cap \Omega} > \theta/2}$ has measure greater than or equal to $\theta/2$. 
Then notice that we may bound the two-dimensional energy in terms of an integral of the one-dimensional energy using Cauchy-Schwarz,
\begin{align*}
E_{ND}[w,u] & \geq \alpha_m h \int_0^1 \int_0^1 \abs{\partial_y w_2 + \frac{1}{2}u_y^2 - \eta}^2 dy dx + h^{3}\int_0^1 \int_0^1 \abs{u_{yy}}^2 dy dx \\ & \quad+ \alpha_s \left(\int\int_{\Omega} \abs{\partial_y w_2}^2 dy dx\right)^{1/2} \left(\int\int_\Omega \abs{w_2}^2 dy dx\right)^{1/2} \\ 
& \geq \alpha_m h \int_0^1 \int_0^1 \abs{\partial_y w_2 + \frac{1}{2}u_y^2 - \eta}^2 dy dx + h^{3}\int_0^1 \int_0^1 \abs{u_{yy}}^2 dy dx
 \\ & \quad + \alpha_s \int_0^1 \left(\int_{\pi^{-1}(x) \cap \Omega} \abs{\partial_y w_2}^2 dy\right)^{1/2} \left(\int_{\pi^{-1}(x) \cap \Omega} \abs{w_2}^2 dy\right)^{1/2} dx.
\end{align*}
It hence follows from \eqref{ineq:Econtrol} and \eqref{ineq:lowerbd1d} of Theorem \ref{thm:oneDLoc} (i) that
\begin{align*} 
E_{ND}[w,u] & \geq \int_E \alpha_mh \int_0^1 \abs{\partial_y w_2 + \frac{1}{2}u_y^2 - \eta}^2 dy + h^3  \int_0^1 \abs{u_{yy}}^2 dy dx 
\\ & \quad + \int_E \alpha_s \left(\int_{\pi^{-1}(x) \cap \Omega} \abs{\partial_y w_2}^2 dy\right)^{1/2} \left(\int_{\pi^{-1}(x) \cap \Omega} \abs{w_2}^2 dy\right)^{1/2} dx \\
& \geq K_1 \frac{\theta}{2}\min\left(\frac{1}{4}\alpha_m h \eta^{2}\theta^2, \alpha_s^{2/3} h\eta^{5/3} \alpha_s^{2/3} \frac{\theta^{5/3}}{2^{5/3}(1-\frac{1}{2}\theta)^{1/3}}\right),
\end{align*}
from which the lower bound \eqref{ineq:lowerbound2d} of Theorem \ref{thm:2d} (i) follows. 
\end{proof} 

\subsection{Upper Bound} \label{sec:2DUpper}
\begin{proof}(Theorem \ref{thm:2d} (ii),(iii))
First we consider the upper bounds in (ii), which are well understood.  
The first upper bound in \eqref{ineq:upperbound2d1} can be realized by the trivial choice of $w = u = 0$, which corresponds to the case of no blistering at all.
The second upper bound in \eqref{ineq:upperbound2d1} follows from the constructions of \cite{JinSternberg01,BelgacemContiSimoneMuller00}, choosing $w = 0$ in the bonded region.

We now turn to the upper bound \eqref{ineq:upperbound2d2} in (iii), for which we use an entirely new construction consisting of a periodic lattice of blisters. 
We divide the construction into three steps; our argument is analogous to the construction of the `origami' patterns in \cite{ContiMaggi08}.
The first step is to build a piecewise linear deformation with sharp folds and exactly zero membrane energy.
Since such a deformation has infinite bending energy, the next step is to use a minimal ridge construction to smooth these folds in a way which achieves the optimal scaling law for folds in F\"oppl-von K\'arm\'an, determined by \cite{Venkataramani03}.  
This step is related to several existing works \cite{ContiMaggi08,Lobkovsky96,AudolyBoudaoudIII08} (see also \cite{AudolyPomeau,Venkataramani03}); our treatment adapts the fully nonlinear construction of \cite{ContiMaggi08}, since it is the cleanest and most flexible approach. 
In the third step, we connect the folds with a procedure similar to that used in \cite{ContiMaggi08}. Then, we compute the total energy of the periodic construction and determine the optimal length-scale $l_2$ which achieves \eqref{ineq:upperbound2d2}.  
\\

\noindent
{\it Step 1: Piecewise linear, zero-membrane energy deformation:}\\
We divide the unit square into periodic cells of side-length $l$, to be chosen later.
In each cell we take the region $[0,\sqrt{\theta} l]\times [0,\sqrt{\theta} l]$ as bonded and the complementary region as debonded, making a blistered region which is in the shape of a sideways `L'. See Figure \ref{fig:MainFold} for a depiction of one cell and Figure \ref{fig:Periodic2D} for several periodic cells laid out. 
For simplicity, for the rest of the proof we will largely ignore the dependence on the area fraction $\theta$. 
In this section we denote the construction $(\bar w, \bar u) = (\bar w_1, \bar w_2, \bar u)$. 

In the bonded region, $\bar u = 0$ and we eliminate the membrane energy by choosing $\bar w = (\bar w_1,\bar w_2) = (\eta x,\eta y)-(\eta x_j,\eta y_j)$, where $(x_j,y_j)$ is the center of the bonded region to make $w$ average zero over the bonded region. 
It follows that the energy in the bonded region is due only to the substrate energy, which is computed to be:
\begin{equation}
\alpha_s \norm{\grad \bar w}_{L^2([0,\sqrt{\theta}l]^2)}\norm{\bar w}_{L^2([0,\sqrt{\theta}l]^2)} \sim \alpha_s\eta^{2} l^3. \label{ineq:substrateEst}
\end{equation}
Next we consider the L-shaped blister on the sides of the periodic cell. 
For now, ignore the corner and consider one of the sides, say the vertical strip, which is of width $(1-\sqrt{\theta})l$. 
Divide the strip into two rectangles vertically, R1 and R2, as shown in Figure \ref{fig:MainFold}. 
\begin{center}
\begin{figure}[hbpt] 
\begin{picture}(150,150)(-100,0)
\put(80,20){$R1$}
\put(110,20){$R2$}
\put(10,100){$R3$}
\put(10,125){$R4$}
\put(37,50){$.$}
\put(30,38){$(x_j,y_j)$}
\scalebox{0.80}{\includegraphics{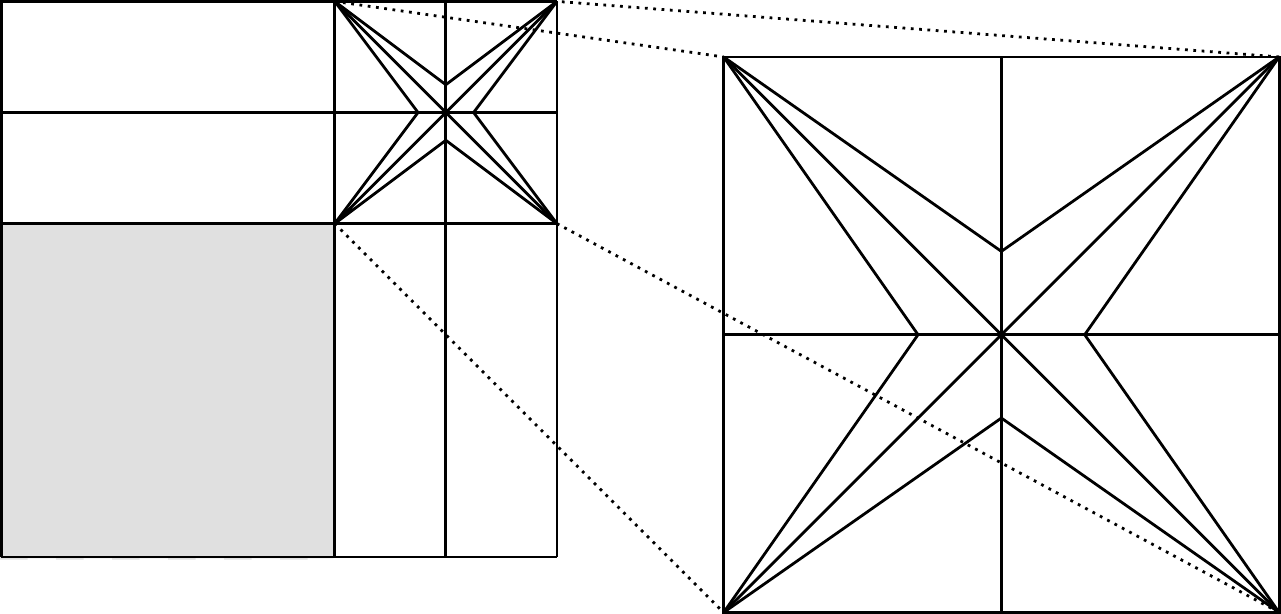}}
\end{picture}
\caption{A periodic cell of side-length $l$ viewed from the top. At this stage in the construction, the black lines represent infinitely thin folds and the displacement $(\bar w,\bar u)$ has exactly zero membrane energy. The shaded region is bonded to the substrate.}  
\label{fig:MainFold}
\end{figure}
\end{center}
\begin{center}
\begin{figure}[hbpt] 
\begin{picture}(150,190)(-100,0)
\scalebox{0.60}{\includegraphics{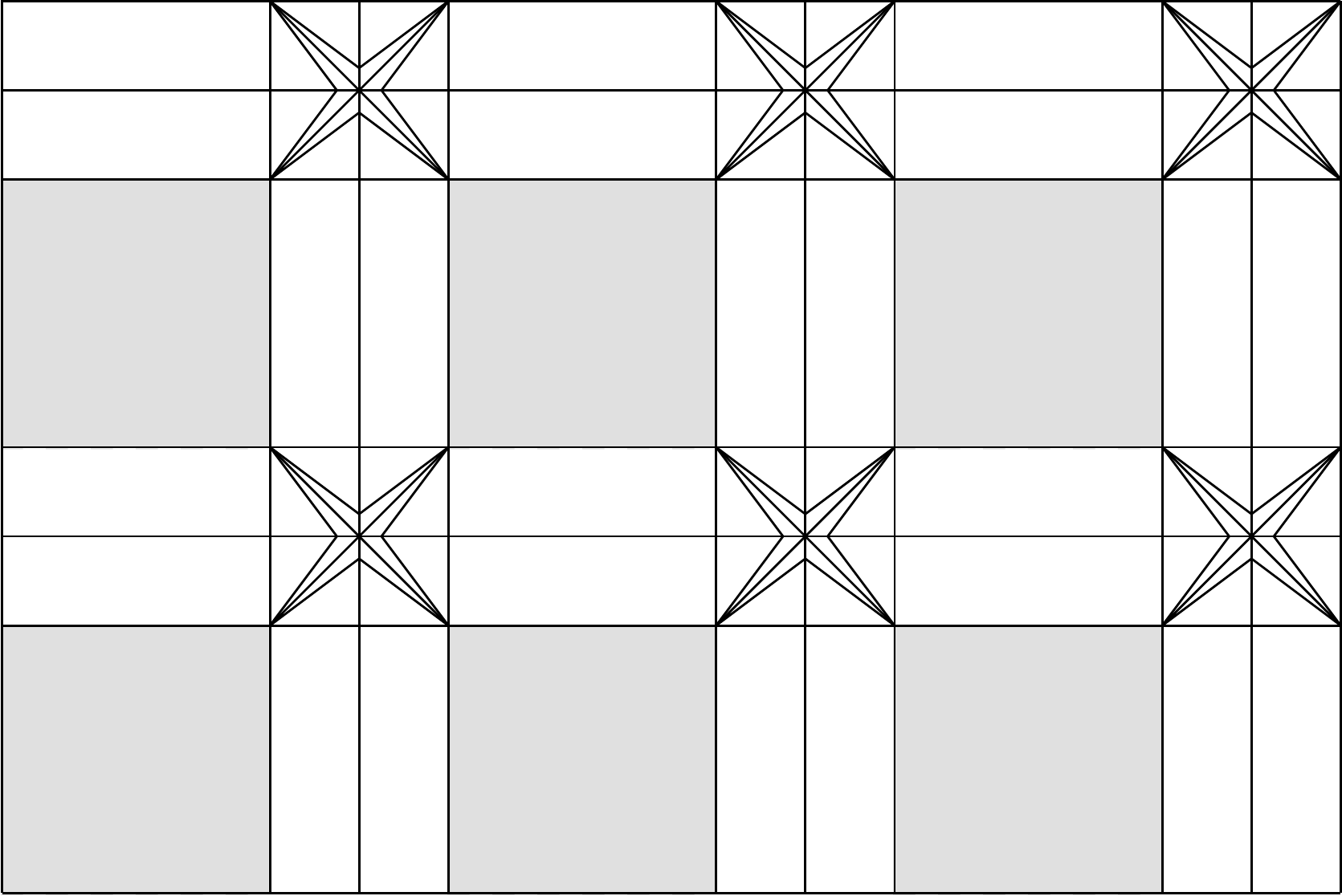}}
\end{picture}
\caption{Six periodic cells used in the construction for the proof of Theorem \ref{thm:2d} (iii).} 
\label{fig:Periodic2D}
\end{figure}
\end{center}
In R1 we set 
\begin{align*} 
\grad \bar w_1 & = \left(-\eta \frac{\sqrt{\theta}}{1-\sqrt{\theta}},0 \right),\\
\grad \bar w_2 & = (0,\eta),\\
\grad \bar u   & = \left(\sqrt{2\eta \left( 1 + \frac{\sqrt{\theta}}{1-\sqrt{\theta}} \right)},0\right), 
\end{align*} 
and in R2 we set
\begin{align*} 
\grad \bar w_1 & = \left(-\eta \frac{\sqrt{\theta}}{1-\sqrt{\theta}},0 \right),\\
\grad \bar w_2 & = (0,\eta),\\
\grad \bar u   & = \left(-\sqrt{2\eta \left( 1 + \frac{\sqrt{\theta}}{1-\sqrt{\theta}} \right)},0\right). 
\end{align*} 
It follows that the membrane energy vanishes in both R1 and R2.
Similarly we can set $(\bar w,\bar u)$ in R3 and R4 by reflecting across $x = y$.  
Strictly speaking, it is not necessary to use folds in the strips; for example, we could use the construction in Section \ref{sec:UppBd1D}.  
However, as the corner construction uses folds, we may as well use them here as well, as the additional folds will only affect the prefactor in the scaling law. 

We now turn our attention to the corner square $[\sqrt{\theta} l,l]\times [\sqrt{\theta} l,l]$.
Here there are relatively few constraints that must be satisfied by the test function: simply that the film connects continuously with the rest of the construction on the edges of the square and satisfies zero membrane energy almost everywhere.  
Since this sub-problem is so unconstrained and has a high degree of redundancy, it is specifically here that \eqref{def:VarP} seems to share some similarities with the crumpling problem. 
Indeed, the construction that follows is close to the constructions in \cite{ContiMaggi08}. 

We determine a suitable $(w,u):[-1,1]\rightarrow \Real^2 \times [0,\infty)$ which we use to define $(\bar w,\bar u)$ on $[\sqrt{\theta}l, l] \times [\sqrt{\theta} l, l]$ via the change of variables: 
\begin{subequations} \label{def:RealSqr}
\begin{align}
(\bar{x},\bar{y}) &= \left( \frac{l}{2} + \frac{l}{2}\sqrt{\theta}, \frac{l}{2} + \frac{l}{2}\sqrt{\theta} \right), \\ 
\bar{w}(x,y) & = \frac{l}{2}(1-\sqrt{\theta})w\left(\frac{2(x - \bar{x})}{(1-\sqrt{\theta})l},\frac{2(y - \bar{y})}{(1-\sqrt{\theta})l}\right) + (\eta (x - \bar{x}), \eta (y - \bar{y})), \label{def:RealSqr2} \\  
\bar{u}(x,y) & = \frac{l}{2}(1-\sqrt{\theta})u\left(\frac{2(x - \bar{x})}{(1-\sqrt{\theta})l},\frac{2(y - \bar{y})}{(1-\sqrt{\theta})l}\right). 
\end{align}
\end{subequations}
As with $(\bar w,\bar u)$, we choose $(w,u)$ to be continuous and piecewise linear. To ensure that $(\bar w,\bar u)$ satisfies zero membrane energy almost everywhere we see from \eqref{def:RealSqr2} that we must have
\begin{align}
\abs{e(w) + \frac{1}{2}\grad u \otimes \grad u} = 0 \textup{ a.e.}. \label{cond:ZeroMemb}
\end{align} 
Moreover, the choice \eqref{def:RealSqr} and the definition of $(\bar w, \bar u)$ in R1-4 imposes boundary conditions on $(w,u)$, which after defining the useful quantity 
\begin{align*} 
\alpha = & = \eta\left( 1 + \frac{\sqrt{\theta}}{1 - \sqrt{\theta}}\right),
\end{align*} 
are written as
\begin{subequations} \label{def:wuBdy}
\begin{align}
w(x,y) & = -(\alpha x,\alpha y), \quad\quad \textup{for } (x,y) \in \partial [-1,1]^2 \\ 
\partial_x u(x,y) & = \sqrt{2\alpha}, \quad\quad\quad\quad \textup{for } y = \pm 1, \;\; x \in (-1,0) \\
\partial_x u(x,y) & = -\sqrt{2\alpha}, \quad\quad\quad \textup{for } y = \pm 1, \;\; x \in (0,1) \label{def:wuBdyuX} \\
\partial_y u(x,y) & = \sqrt{2\alpha}, \quad\quad\quad\quad \textup{for } x = \pm 1, \;\; y \in (-1,0) \\
\partial_y u(x,y) & = -\sqrt{2\alpha}, \quad\quad\quad \textup{for } x = \pm 1, \;\; y \in (0,1) \\
u(-1,-1) & = u(-1,1) = u(1,-1) = u(1,1) = 0. \label{def:wuBdyu0}
\end{align}
\end{subequations}

The boundary conditions \eqref{def:wuBdy} and the condition \eqref{cond:ZeroMemb} are the only constraints we have on $(w,u)$, so there is a certain amount of freedom. Our construction is chosen to have folds at all of the solid black lines shown in the right half of Figure \ref{fig:MainFold}.
We choose the following reflection symmetries:
\begin{subequations} \label{cond:symsfull}
\begin{align}
(w_1(x,y),w_2(x,y),u(x,y)) &= (-w_1(-x,y),w_2(-x,y),u(-x,y)) \\
(w_1(x,y),w_2(x,y),u(x,y)) &= (w_1(x,-y),-w_2(x,-y),u(x,-y)) \\ 
(w_1(x,y),w_2(x,y),u(x,y)) &= (-w_2(-y,-x),-w_1(-y,-x),u(-y,-x)) \\ 
(w_1(x,y),w_2(x,y),u(x,y)) &= (w_2(y,x),w_1(y,x),u(y,x)),  
\end{align} 
\end{subequations}
which correspond to natural reflections across the vertical, horizontal and diagonals respectively. 
By continuity, the symmetries imply
\begin{subequations} \label{cond:syms}
\begin{align}
w_1(0,y) & = 0, \\
w_2(x,0) & = 0, \\
w_1(x,-x) + w_2(x,-x) & = 0, \\ 
-w_1(x,x) + w_2(x,x) & = 0. 
\end{align}
\end{subequations}
Due to our choice of symmetries, it suffices to construct the $(w,u)$ in the right triangle defined by $C = (0,0)$, $A = (0,-1)$ and $B = (1,-1)$. 
The triangle we are considering here can be seen in Figure \ref{fig:CloseUp}. 
\begin{center}
\begin{figure}[hbpt] 
\begin{picture}(150,150)(-100,0)
\put(115,135){$C = (0,0)$}
\put(45,95){$D = (0,-d)$}
\put(135,45){$T_1$}
\put(120,100){$T_2$}
\put(45,0){$A=(0,-1)$}
\put(250,0){$B = (1,-1)$}
\scalebox{0.80}{\includegraphics{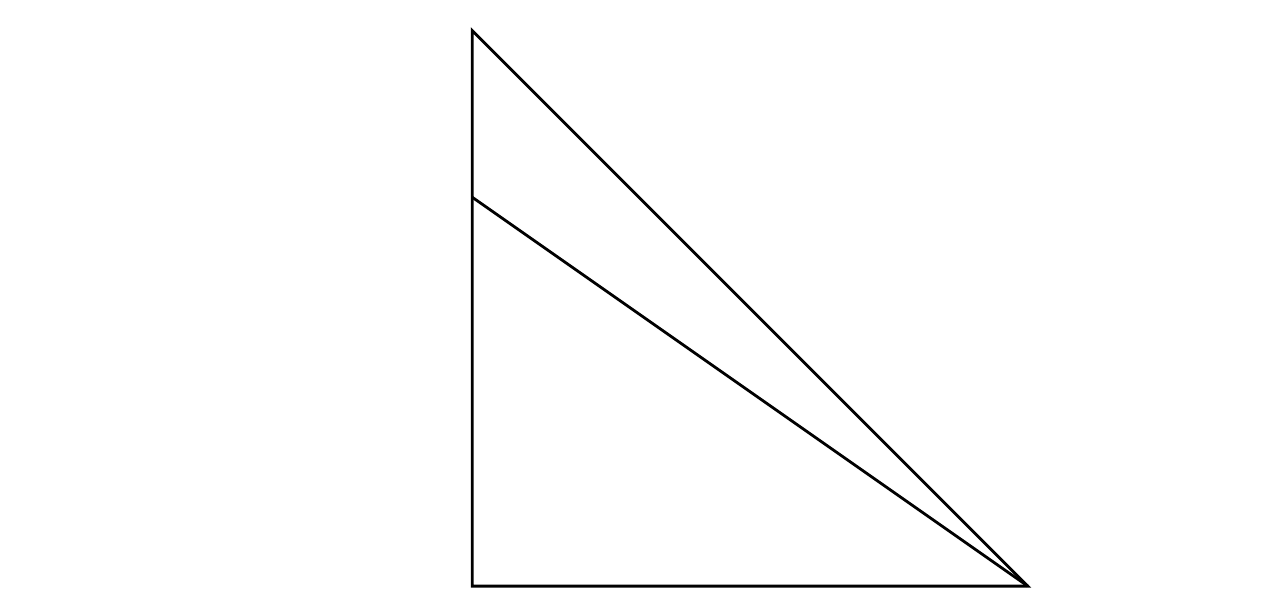}}
\end{picture}
\caption{The renormalized triangle of the corner construction. The lines indicate infinitely thin folds.}
\label{fig:CloseUp}
\end{figure}
\end{center}
Let $d = (\sqrt{2} - 1)^{2} = 3 - 2\sqrt{2}$ and define a fourth point $D = (0,-d)$.
It turns out that this is the unique choice of $d$ for which the following construction is possible. 
Define two triangles, the right triangle defined by $BDA$ denoted $T_1$ and the triangle defined by $DCB$ denoted $T_2$ (see Figure \ref{fig:CloseUp}). 
We use linear interpolation to define $w$ and $u$, which is equivalent to 
the gradients being constant in the interiors of $T_1$ and $T_2$.   
The boundary conditions \eqref{def:wuBdy} and symmetries \eqref{cond:syms} constrain the values of $w$ at $A$, $B$ and $C$, and we make the choice
\begin{align*}
w(D) = (0,\alpha). 
\end{align*} 
By linear interpolation, this defines the in-plane displacement in the interiors of both triangles. 
Now we choose $u$ to eliminate the membrane term in the F\"oppl-von K\'arm\'an formulation with $\eta = 0$: 
\begin{align*} 
\int_{T_1 \cup T_2} \abs{e(w) + \frac{1}{2}\grad u \otimes \grad u}^2dx dy & = \int_{T_1 \cup T_2} \abs{\partial_x w_1 + \frac{1}{2}\abs{\partial_x u}^2}^2 dx dy \\ 
& \quad + \int_{T_1 \cup T_2} \abs{\partial_y w_2 + \frac{1}{2}\abs{\partial_y u}^2}^2 dx dy \\
& \quad + \frac{1}{4}\int_{T_1 \cup T_2} \abs{\partial_x w_2 + \partial_y w_1 + \partial_x u \partial_y u}^2 dx dy. 
\end{align*}
By \eqref{def:wuBdyu0} we have $u(B) = 0$, as the film is bonded at the corner.  
Moreover, by \eqref{def:wuBdyuX}, $u(A) = \sqrt{2\alpha}$. 
In order to eliminate the $x$ contribution to the membrane energy in $T_1$ we choose
\begin{align*}
\grad w_1 = (-\alpha,0), \quad \grad w_2 = (0,0),\quad \grad u = (-\sqrt{2\alpha},0). 
\end{align*} 
This in turn implies $u(D) = \sqrt{2\alpha}$ and it follows quickly that the membrane energy vanishes on $T_1$. 
In $T_2$ we know from the definition of $w$, denoting $e_{BC} := 2^{-1/2}(-1,1)$,  
\begin{align*} 
\grad w_1 \cdot e_y & = 0, \\ 
\grad w_2 \cdot e_y & = \frac{-\alpha}{d}, \\ 
\grad w_1 \cdot e_{BC} &= \frac{\alpha}{\sqrt{2}}, \\ 
\grad w_2 \cdot e_{BC} &= -\frac{\alpha}{\sqrt{2}},
\end{align*} 
which implies
\begin{align*}
\partial_x w_1 &= -\alpha, \\
\partial_x w_2 &= \alpha - \frac{\alpha}{d}. 
\end{align*}
Eliminating the $x$ and $y$ contributions to the membrane energy dictates the choice 
\begin{align*} 
\grad u = \left(\sqrt{2\alpha},\sqrt{\frac{2\alpha}{d}}\right), 
\end{align*}
which implies
\begin{align*}
u(C) = u(D) + d\grad u \cdot e_y =  \sqrt{2\alpha} + \sqrt{2\alpha d} = \sqrt{2 \alpha}\left(\frac{1}{\sqrt{d}} -1\right) = \sqrt{2}\grad u \cdot e_{BC}, 
\end{align*}
where the third equality follows from the specific choice of $d$. 
It remains to verify that the entire membrane energy vanishes on $T_2$: 
\begin{align*} 
\partial_x w_1 + \frac{1}{2}u_x^2  = -\alpha + \frac{1}{2}2 \alpha & = 0. \\ 
\partial_y w_2 + \frac{1}{2}u_y^2  = -\frac{\alpha}{d} + \frac{1}{2}\frac{2\alpha}{d} & = 0. \\ 
\partial_x w_2 + \partial_y w_1 + u_x u_y  = \alpha - \frac{\alpha}{d} + \sqrt{2\alpha} \sqrt{\frac{2\alpha}{d}} & = \alpha\left( \frac{d - 1 + 2\sqrt{d}}{d}\right) \\ 
& = \frac{ (\sqrt{2} - 1)^2 - 1 + 2\sqrt{2} - 2}{d} \\
& = 0.   
\end{align*} 
By the symmetries \eqref{cond:symsfull} we then define $(u,w)$ on the remaining 7 triangles. 
At this point we can then use \eqref{def:RealSqr} to define $(\bar w, \bar u)$ in the square $[\sqrt{\theta}l, l] \times [\sqrt{\theta} l, l]$, which completes the construction of a piecewise linear, zero-membrane energy deformation with infinitely thin folds. 
\\

\noindent
{\it Step 2: Minimal Ridge Smoothing}\\
Next, we must refine the preceding construction into one which has finite bending energy. 
The primary step in this direction is to smooth the infinitely thin folds with a minimal ridge which obtains the optimal scaling law. We also must smooth at the vertices, but this is straightforward and is carried out in Step 3.

The minimal ridge results from a balance of bending and membrane energy along the fold. 
An `obvious' smoothing approach of rounding out the fold over a fixed thickness is sub-optimal, and instead a smoothed region which is variable along the length of the fold turns out to do better. 
The following lemma is an adaptation of Lemma 2.1 in \cite{ContiMaggi08} (the nonlinear construction there must be modified to fit into the F\"oppl-von K\'arm\'an framework).
We must know precisely how large a smoothing region is necessary to obtain the optimal scaling law determined in \cite{Venkataramani03}  (optimal with respect to both $\eta$ and $h$). 
Condition \eqref{cond:sigma} in Lemma \ref{lem:MinRidge} below will determine \eqref{cond:2d1b} by placing a lower bound on the size of the construction which ensures there is room to choose the smoothing parameter as \eqref{def:sigma}. 
Condition \eqref{cond:inplane} determines \eqref{cond:2d2} which ensures that the nonlinear construction of \cite{ContiMaggi08} can still achieve the optimal scaling if properly adapted to the F\"oppl-von K\'arm\'an energy.
As in \cite{ContiMaggi08}, we consider the deformation of the quadrilateral $[abcd]$ given in Figure \ref{fig:MinRidge}.  
Unlike the fully nonlinear model, the F\"oppl-von K\'arm\'an energy is not invariant with respect to rotations so we must state the lemma for a general piecewise linear transformation. 
\begin{center}
\begin{figure}[hbpt] 
\begin{picture}(150,150)(-100,0)
\put(65,60){$a$}
\put(230,65){$c$}
\put(120,0){$d$}
\put(100,130){$b$}
\scalebox{0.80}{\includegraphics{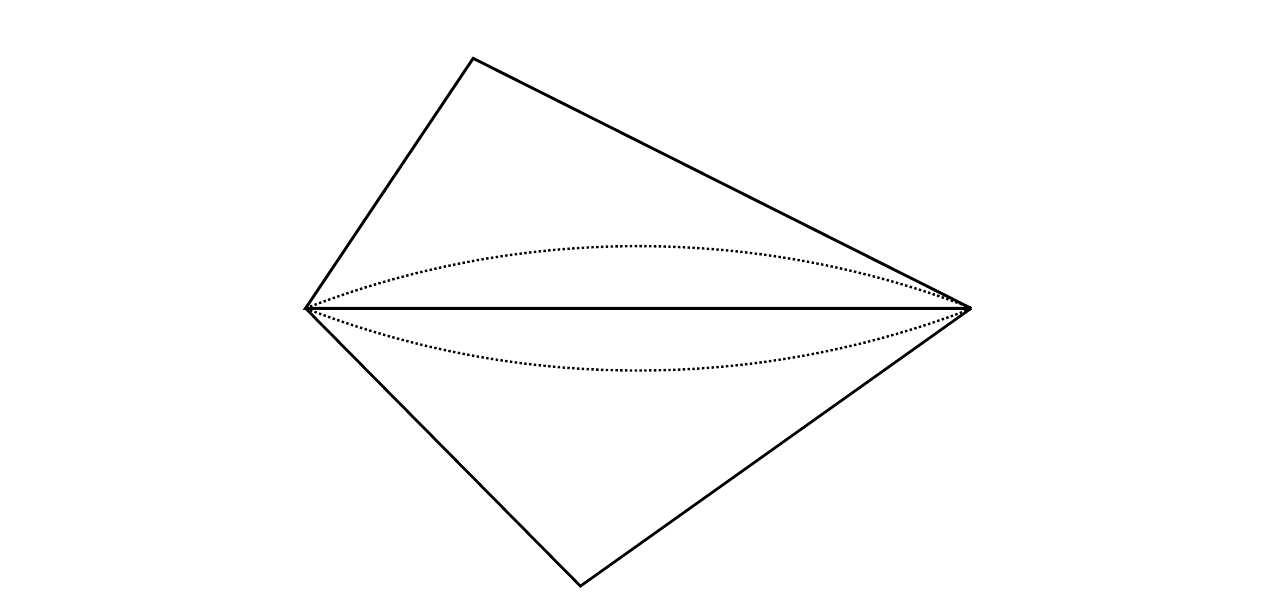}}
\end{picture}
\caption{The quadrilateral considered in Lemma \ref{lem:MinRidge}. The dashed lines denote $y = \pm f(x)$, which encloses the region where smoothing is added (except near the vertices where a different smoothing takes place - see Step 3).}  
\label{fig:MinRidge}
\end{figure}
\end{center}
\begin{lemma}[F\"oppl-von K\'arm\'an Minimal Ridge] \label{lem:MinRidge}
Consider the quadrilateral $[abcd]$ (see Figure \ref{fig:MinRidge}) such that the length of $\abs{ac} = l$ and $\abs{bd} \sim l$ (hence the triangles are not badly skewed, which is equivalent to the parameter $\tau$ in \cite{ContiMaggi08} being $O(1)$).  
Suppose there exists a continuous, piecewise linear $(\hat{w}_1, \hat{w}_2,\hat{u})$ which satisfies 
\begin{align*}
\abs{\partial_x \hat{w}_1 + \frac{1}{2}(\partial_x \hat{u})^2 - \eta} + \abs{\partial_y \hat{w}_2 + \frac{1}{2}(\partial_y \hat{u})^2- \eta} + \abs{\partial_x \hat{w}_2 + \partial_y \hat{w}_1 + \partial_x \hat{u} \partial_y \hat{u}} = 0 \textup{ in } [acd] \textup{ and } [abc].  
\end{align*}
Define $\alpha_L =\partial_y \hat{u}|_{[acd]}$, $\alpha_R = \partial_y \hat{u}|_{[abc]}$
and denote $\phi = \max\left(\abs{\alpha_L}, \abs{\alpha_R}\right)$ which we assume is strictly positive and $\phi \leq 1$ ($\phi$ is a bound for the angle of the fold).
We fix a smoothing parameter $\sigma < l/8$  
and assume  
\begin{equation}
\phi^4 \lesssim \frac{\sigma^{2/3}}{l^{2/3}}. \label{cond:inplane}
\end{equation} 
Define the region $[abcd]_\sigma = [abcd] \setminus \left(B(a,\sigma) \cup B(c,\sigma)\right)$. 
Then there exists a deformation $(w_1,w_2,u)$ which satisfies 
\begin{align*} 
\norm{w_i - \hat{w}_i}_{L^\infty([abcd])} + \norm{u - \hat{u}}_{L^\infty([abcd])} & \lesssim \phi l^{2/3}\sigma^{1/3} \\ 
\norm{\grad u - \grad\hat{u}}_{L^\infty([abcd])} & \lesssim  \phi \\ 
\norm{\grad w_i - \grad \hat{w}_i}_{L^\infty([abcd])} & \lesssim \phi^2 \\
\norm{D^2u}_{L^\infty([abcd]_\sigma)} & \lesssim \frac{\phi}{\sigma},
\end{align*}
and 
\begin{align*}
\alpha_m h \int_{[abcd]_\sigma} \abs{e(w) - \frac{1}{2}\grad u \otimes \grad u - \eta I}^2 dx + h^3 \int_{[abcd]_\sigma} \abs{D^2u}^2 dx \lesssim h\left[\alpha_m \phi^4 \sigma^{5/3}l^{1/3} + h^2\frac{\phi^2 l^{1/3}}{\sigma^{1/3}} \right]. 
\end{align*}   
Moreover, 
\begin{align*} 
\left(w,u,\grad w, \grad u\right) = \left(\hat w,\hat u,\grad \hat w, \grad \hat u\right) \textup{ on } \partial[abcd]. 
\end{align*}
Choosing the optimal $\sigma$, specifically, 
\begin{align}
\sigma = \frac{h}{\alpha_m^{1/2} \phi}, \label{def:sigma}
\end{align}
we get the estimate
\begin{align}
\alpha_m h \int_{[abcd]_\sigma} \abs{e(w) - \frac{1}{2}\grad u \otimes \grad u - \eta I}^2 dx + h^3 \int_{[abcd]_\sigma} \abs{D^2u}^2 dx \lesssim \alpha_m^{1/6} \phi^{7/3} l^{1/3} h^{8/3}.  \label{ineq:optEnergy}
\end{align}
For \eqref{def:sigma} and \eqref{ineq:optEnergy} to be valid we require
\begin{align}
\frac{h}{\alpha_m^{1/2} \phi} < c l, \label{cond:sigma}
\end{align}
for a sufficiently small constant $c > 0$. 
\end{lemma}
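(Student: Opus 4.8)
The plan is to adapt the recovery-sequence construction of Conti--Maggi (their Lemma~2.1) to the small-slope framework, while keeping careful track of the terms that are sensitive to the loss of rotation invariance of the F\"oppl-von K\'arm\'an energy. First I would fix coordinates adapted to the fold: write the common edge, of length $\abs{ac}=l$, along the $x$-axis parametrized by $x$, with $y$ the transverse coordinate, so that near the fold $[acd]=\{y<0\}$ and $[abc]=\{y>0\}$; on each triangle $\hat u$ is affine with $\partial_y\hat u$ equal to $\alpha_L$, resp.\ $\alpha_R$, and $\hat w$ is affine with vanishing membrane strain by hypothesis. The modification will be supported in a thin neighbourhood $R$ of the segment $ac$, which meets $\partial[abcd]$ only at the two points $a,c$; hence the equalities $(w,u,\grad w,\grad u)=(\hat w,\hat u,\grad\hat w,\grad\hat u)$ on $\partial[abcd]$ will be automatic once the construction is shown to be globally $C^1$ in $u$ and continuous in $w$.

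Next I would introduce the ridge profile: a function $f\colon[0,l]\to(0,\infty)$ with $f(x)\sim\sigma^{1/3}\bigl(\min(x,l-x)\bigr)^{2/3}$ where $\min(x,l-x)\gtrsim\sigma$, tapered to a linear, capped (hence $O(1)$-slope) profile $f(x)\sim\min(x,l-x)$ for $\min(x,l-x)\lesssim\sigma$, so that $\abs{f'}\lesssim1$ everywhere, $f(x)\gtrsim\sigma$ on $[abcd]_\sigma$, and (using $\sigma<l/8$) $f<l/8$. Set $R=\{\abs{y}<f(x)\}$, keep $(w,u)=(\hat w,\hat u)$ on $[abcd]\setminus R$, and inside $R$ replace the transverse kink of $\hat u$ by $u=\hat u_-+(\alpha_R-\alpha_L)f(x)P(y/f(x))$, where $\hat u_-$ is the affine extension of the $\{y<0\}$-piece and $P$ is a fixed smooth nondecreasing profile with $P\equiv0$ on $(-\infty,-1]$, $P(t)=t$ on $[1,\infty)$, $P(-1)=0,\ P(1)=1,\ P'(\pm1)=0$. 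A short computation shows $u$ and $\grad u$ agree with $\hat u$, $\grad\hat u$ on $\{y=\pm f\}$, and that $\abs{\partial_{yy}u}\lesssim\phi/f$, $\grad u-\grad\hat u=O(\phi)$ on $R$. The in-plane field $w$ is then chosen inside $R$ to cancel the membrane strains that are linear in the correction $u-\hat u$: integrating in $y$ removes the transverse normal and shear pieces, integrating in $x$ removes the longitudinal piece, and (by a suitable choice of $P$, plus if needed a further small correction) the resulting $w$ is continuous across $\{y=\pm f\}$, matches $\hat w$ there, and satisfies $w-\hat w=O(\phi^2 f)$, $\grad w-\grad\hat w=O(\phi^2)$; the leftover membrane strain is of size $\phi^2$ times a weight built from $f'$, exactly as in Conti--Maggi.

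With the construction in place the stated bounds follow from direct estimates: the displacement and gradient bounds are read off from the formulas together with $f\le f(l)\sim\sigma^{1/3}l^{2/3}$ and $f\gtrsim\sigma$ on $[abcd]_\sigma$. For the energy, the bending term is $h^3\int_{R}\abs{\partial_{yy}u}^2\sim h^3\phi^2\int_0^l f(x)^{-1}\,dx\sim h^3\phi^2 l^{1/3}\sigma^{-1/3}$ — the $x$-integral converges because $f\sim\sigma^{1/3}x^{2/3}$ makes the integrand $\sim x^{-2/3}$, and the genuinely singular part $\{\min(x,l-x)<\sigma\}$ sits inside $B(a,\sigma)\cup B(c,\sigma)$ and is excluded — while the residual membrane term is $\alpha_m h\int_{R}(\phi^2\abs{f'}^2)^2\sim\alpha_m h\phi^4\int_0^l\abs{f'(x)}^4 f(x)\,dx\sim\alpha_m h\phi^4\sigma^{5/3}l^{1/3}$. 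Adding the two gives the pre-optimization estimate; minimizing $\alpha_m\phi^4\sigma^{5/3}l^{1/3}+h^2\phi^2\sigma^{-1/3}l^{1/3}$ over $\sigma$ by AM--GM yields $\sigma\sim h/(\alpha_m^{1/2}\phi)$ and value $\sim\alpha_m^{1/6}\phi^{7/3}l^{1/3}h^{8/3}$, i.e.\ \eqref{def:sigma} and \eqref{ineq:optEnergy}; the admissibility requirement $\sigma<l/8$ — needed so that the profile $f$ and the excised balls fit — is precisely \eqref{cond:sigma}.

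The main obstacle is the construction and bookkeeping of $w$. The leading shear contribution $\partial_x u\,\partial_y u$ to the membrane strain is of order $\phi^2\abs{f'}$, larger than the residual $\phi^2\abs{f'}^2$ we wish to retain, so it must be cancelled, and the cancellation has to be arranged so that $w$ still matches $\hat w$ (to first order) across $\{y=\pm f\}$ while leaving only the $O(\phi^2\abs{f'}^2)$ strain; this is exactly the delicate point of the Conti--Maggi argument. It is also essentially the only place the hypotheses enter beyond geometry: condition \eqref{cond:inplane} — equivalently a bound on $\phi$ by a power of the ridge-width-to-length ratio — is what guarantees that the terms dropped in passing from the fully nonlinear membrane energy to the F\"oppl-von K\'arm\'an one (those involving $\grad w^T\grad w$ and higher powers of $\grad u$) are negligible relative to the membrane residual we keep, so that the adaptation of the nonlinear construction is self-consistent. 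The neighbourhoods of $a$ and $c$ require no separate work here: $R$ pinches to a point there, the construction stays continuous, and the large bending and membrane contributions inside $B(a,\sigma)\cup B(c,\sigma)$ are simply omitted from the estimate on $[abcd]_\sigma$, to be reinstated by the separate vertex smoothing of Step~3.
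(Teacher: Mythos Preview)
Your approach is essentially the paper's: adapt Conti--Maggi's Lemma~2.1, use the ridge width $f\sim\sigma^{1/3}x^{2/3}$, smooth $u$ transversally, and choose $w$ to kill the leading membrane terms. The energy bookkeeping you sketch is correct and matches the paper's.

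Two places where your write-up is looser than what the proof actually needs. First, a slip: your conditions on $P$ are inconsistent---if $P(t)=t$ for $t\ge1$ then $P'(1)=1$, not $0$; you need $P'(-1)=0$, $P'(1)=1$ for $\grad u$ to match $\grad\hat u$ across $\{y=\pm f\}$. Second, the construction of $w$ is where the real work lies, and ``integrate in $y$, then in $x$'' does not quite close: $w_1$ cannot be defined simultaneously to kill the shear and the longitudinal strains unless a compatibility condition holds. The paper resolves this (following Conti--Maggi) by writing $w_2$ and $u$ in terms of a curve $\gamma(t)=(0,\gamma_2,\gamma_3)$ satisfying the FvK analogue of arclength parametrization, $\gamma_2'+\tfrac12(\gamma_3')^2=1$, which makes the $y$-strain \emph{exactly} zero; $w_1$ is then built from the Conti--Maggi correction $\beta$, but with an additional linear-in-$y$ term because the FvK constraint is not the same as unit speed and the naive $\beta$ fails to match at $\partial D$. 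After this, the shear strain leaves a residual of size $\phi^8\sigma l$, and \emph{this} is where \eqref{cond:inplane} enters technically: it guarantees $\phi^8\sigma l\lesssim\phi^4\sigma^{5/3}l^{1/3}$, so the residual is dominated by the main membrane term. Your interpretation of \eqref{cond:inplane} as a modeling condition is the paper's physical gloss (Remark after Theorem~\ref{thm:2d}), not its role in the proof.
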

\begin{proof} 
The set up is similar to Lemma 2.1 in \cite{ContiMaggi08} with several changes to accommodate the lack of rotational invariance and the form of the F\"oppl-von K\'arm\'an membrane energy. 

First, translate the coordinate system and assume without loss of generality that the vertex $a$ is located at $(0,0)$
and that $\hat w_1(a) = \hat w_2(a) = \hat{u}(a) = 0$. 
We may replace the membrane energy with 
\begin{align*} 
\alpha_m h \int_{[abcd]_\sigma} \abs{e(w) - \frac{1}{2}\grad u \otimes \grad u}^2 dx, 
\end{align*}
by replacing $(\hat w_1,\hat w_2)$ by $(\hat w_1,\hat w_2) - \eta(x,y)$, which does not change $\hat u$ and does not change the meaning of $\phi$ or the content of Lemma \ref{lem:MinRidge}. 
Similarly, for the remainder of the proof we assume 
\begin{align*}
\abs{\partial_x \hat{w}_1 + \frac{1}{2}(\partial_x \hat{u})^2} + \abs{\partial_y \hat{w}_2 + \frac{1}{2}(\partial_y \hat{u})^2} + \abs{\partial_x \hat{w}_2 + \partial_y \hat{w}_1 + \partial_x \hat{u} \partial_y \hat{u}} = 0 \textup{ in } [acd] \textup{ and } [abc].
\end{align*}
Next, notice by continuity that the $x$ derivatives of $\hat w_1, \hat w_2, \hat u$ must be the same in both triangles. That is, 
\begin{align*} 
\partial_x \hat u & := \partial_x \hat u|_{[acd]} = \partial_x \hat u|_{[abc]} \\
\partial_x \hat w_1 &:= \partial_x \hat w_1|_{[acd]} = \partial_x \hat w_1|_{[abc]} \\ 
\partial_x \hat w_2 & := \partial_x \hat w_2|_{[acd]} = \partial_x \hat w_2|_{[abc]}.
\end{align*} 
Replacing $(\hat w_1,\hat w_2)$ by $(\hat w_1 + \alpha y, \hat w_2 - \alpha x)$, we may assume without loss of generality that $\partial_x \hat w_2 = 0$.  

As in the nonlinear case, $\sigma$ is a smoothing parameter which corresponds to the characteristic width of the fold. 
The work of \cite{Lobkovsky96,Venkataramani03} predicts that the width of the fold should vary along the length, and this is also the case in \cite{ContiMaggi08} and here. 
As in \cite{ContiMaggi08} define 
\begin{align} 
f_0(x) = \tau \sigma^{1/3}(x+\sigma)^{2/3} - \tau \sigma, \label{def:f0}
\end{align}
where $\tau$ is the minimum of the absolute values of slopes of the lines $[ab],[ad],[dc]$ and $[bc]$.
We are assuming $\tau = O(1)$ and we will henceforth not further concern ourselves with it.
 As in \cite{ContiMaggi08} we may define a smooth $f$ which obeys similar estimates as $f_0$, in particular (2.30) in \cite{ContiMaggi08} which implies $\partial_x^nf_0 \sim \partial_x^nf$ for $0 \leq n \leq 3$; we omit the details.
This $f$ will define the region in which smoothing is taking place: the width of the fold as a function of $x$. Figure \ref{fig:MinRidge} shows the smoothing region denoted by the dotted lines roughly as defined by $f(x)$.
Define the region $D := \set{(x,y)\in [abcd]: -f(x) < y < f(x)}$. 
This is the only region where the construction alters $(\hat{w}_1,\hat{w}_2,\hat{u})$.

The construction in \cite{ContiMaggi08} has two additional components: the curve $\gamma(t):[-1,1] \rightarrow \Real^3$ which is used to define the deformation in the $y$ direction and $\beta(x,y):\Real^2 \rightarrow \Real$ which corrects the deformation in the $x$ direction.
To define the curve $\gamma$, Lemma 2.2 in \cite{ContiMaggi08} must be adapted to the F\"oppl-von K\'arm\'an setting, carried out below in Lemma \ref{lem:gammachoice} and applied with $\alpha_L = \partial_y \hat{u}|_{[acd]}$ and $\alpha_R = \partial_y \hat{u}|_{[abc]}$ to produce a curve $\gamma$ which is consistent with the deformation in the region $(x,y) \notin D$. 
The only non-trivial difference between Lemma 2.2 in \cite{ContiMaggi08} and Lemma \ref{lem:gammachoice} is that the condition
\begin{align} 
\gamma_2^\prime(t) + \frac{1}{2}(\gamma^\prime_3(t))^2 - 1 & = 0 \label{cond:FvKGamma}  
\end{align}
replaces the arclength parametrization condition in \cite{ContiMaggi08} (the former is the F\"oppl-von K\'arm\'an analogue of the latter).

In \cite{ContiMaggi08}, $\beta(x,y)$ is defined in terms of $\gamma$ via equation (2.18) of \cite{ContiMaggi08}. 
Unfortunately, since \eqref{cond:FvKGamma} is not equivalent to being unit-speed, if we define $\beta$ in the same fashion here we will not have $\beta(x,y) = x$ at $\partial D$ (we cannot guarantee that $\zeta$ in \cite{ContiMaggi08} integrates to zero), which is necessary for the minimal ridge to be consistent with the $(\hat w,\hat u)$ outside $D$.    
This can be corrected by removing the defect; we therefore define
\begin{align}
\beta(x,y) = x - f(x)f^\prime(x) \int_{-1}^{\frac{y}{f(x)}} \gamma^\prime(t) \cdot \eta(t) dt + \frac{y + f(x)}{2}f^\prime(x) \int_{-1}^{1} \gamma^\prime(t) \cdot \eta(t) dt,  \label{def:beta}
\end{align}
where as in \cite{ContiMaggi08}, $\eta(t) = \gamma - t\gamma^\prime(t)$. 
Defining $E = \int_{-1}^{1} \gamma^\prime(t) \cdot \eta(t) dt$, we have
\begin{align*}
E = \int_{-1}^{1} \gamma^\prime(t) \cdot \eta(t) dt & = \int_{-1}^{1} \gamma^\prime(t) \cdot \left(\gamma - t\gamma^\prime(t)\right) dt \\ 
  & = \int_{-1}^{1} \gamma_2^\prime(t)\gamma_2(t) + \gamma_3^\prime(t)\gamma_3(t) - t\abs{\gamma_2^\prime(t)}^2 - t\abs{\gamma_3^\prime(t)}^2 dt \\ 
  & = \frac{1}{2}\abs{\gamma_2(1)}^2 - \frac{1}{2}\abs{\gamma_2(-1)}^2 + \frac{1}{2}\abs{\gamma_3(1)}^2 - \frac{1}{2}\abs{\gamma_3(-1)}^2  \\ 
& \quad + \int_{-1}^1 t\left(\abs{\gamma_2^\prime(t)}^2 - 2 + 2\gamma_2^\prime + \abs{\gamma_3^\prime(t)}^2 + 2 - 2\gamma_2^\prime \right) dt \\ 
  & = \frac{1}{2}\abs{\gamma_2(1)}^2 - \frac{1}{2}\abs{\gamma_2(-1)}^2 + \frac{1}{2}\abs{\gamma_3(1)}^2 - \frac{1}{2}\abs{\gamma_3(-1)}^2   \\ 
& \quad + \int_{-1}^1 t\abs{\gamma_2^\prime(t) -1}^2 + t dt, 
\end{align*} 
where the last line follows from \eqref{cond:FvKGamma}. By \eqref{ineq:gammaerror} in Lemma \ref{lem:gammachoice} below, 
\begin{align} 
\abs{E} = \abs{\int_{-1}^{1} \gamma^\prime(t) \cdot \eta(t) dt} \lesssim \phi^4. \label{ineq:betaintegral}
\end{align}
For future reference define
\begin{align*}
\omega(s) = \int_{-1}^{s} \gamma^\prime(t) \cdot \eta(t) dt. 
\end{align*}

With $\gamma$ defined by Lemma \ref{lem:gammachoice} and $\beta$ defined by \eqref{def:beta} we may now define the deformation 
\begin{subequations} \label{def:FvkCM}
\begin{align}
w_1(x,y) & := \left\{ 
\begin{array}{ll} 
\beta(x,y) - x - \partial_x \hat u f(x)\gamma_3\left(\frac{y}{f(x)}\right) + (\partial_x \hat w_1)x & (x,y) \in D \\ 
\hat{w}_1(x,y) & (x,y) \notin D, 
\end{array}
\right. \\
w_2(x,y) & := \left\{
\begin{array}{ll} 
f(x)\gamma_2\left(\frac{y}{f(x)}\right) - y & (x,y) \in D \\ 
\hat{w}_2(x,y) & (x,y) \notin D,  
\end{array}
\right. \\ 
u(x,y) & := \left\{
\begin{array}{ll} 
f(x)\gamma_3\left(\frac{y}{f(x)}\right) + (\partial_x \hat u)x & (x,y) \in D \\ 
\hat{u}(x,y) & (x,y) \notin D. 
\end{array}
\right.
\end{align} 
\end{subequations}
This is similar to Lemma 2.1/2.5 in \cite{ContiMaggi08} except for subtracting $(x,y)$ from the in-plane deformation, the natural adjustment for the F\"oppl-von K\'arm\'an framework, and the adjustments that account for non-zero $x$-derivatives, a complication avoided in \cite{ContiMaggi08} by using the rotation invariance of finite elasticity.  

Assuming Lemma \ref{lem:gammachoice}, we now estimate the energy of \eqref{def:FvkCM}, arguing as in \cite{ContiMaggi08}. 
Computing the $x$-term in the F\"oppl-von K\'arm\'an energy, 
\begin{align*}
\int_{D} \abs{\partial_x w_1 + \frac{1}{2}(\partial_x u)^2}^2 dx dy &  \\
& \hspace{-3cm} = \int_{D} \abs{\beta_x - 1 + \partial_x \hat w_1 - \partial_x \hat u \partial_x\left[f(x)\gamma_3\left(\frac{y}{f(x)}\right)\right] + \frac{1}{2}\left(\partial_x\left[f(x)\gamma_3\left(\frac{y}{f(x)}\right)\right] + \partial_x \hat u \right)^2  }^2 dx dy \\ 
& \hspace{-3cm} = \int_{D} \abs{\beta_x - 1 + \partial_x \hat w_1 + \frac{1}{2}\left(\partial_x\left[f(x)\gamma_3\left(\frac{y}{f(x)}\right)\right]\right)^2 + \frac{1}{2}\left(\partial_x \hat u\right)^2  }^2 dx dy \\ 
& \hspace{-3cm} = \int_{D} \abs{\beta_x - 1 + \frac{1}{2}\left(\partial_x\left[f(x)\gamma_3\left(\frac{y}{f(x)}\right)\right]\right)^2}^2 dx dy. 
\end{align*} 
From \eqref{def:beta} we have, 
\begin{align*}
\int_{D} \abs{\partial_x w_1 + \frac{1}{2}(\partial_x u)^2}^2 dx dy & \\ 
& \hspace{-3cm} \lesssim \int_0^l \int_{-f(x)}^{f(x)} \abs{ - \left( f f^{\prime\prime} + (f^\prime)^2 \right) \omega\left(\frac{y}{f(x)}\right) + \frac{(f^{\prime})^2 + f^{\prime\prime}(y+f)}{2}\omega(1)}^2 dy dx \\ 
 & \hspace{-2.5cm} + \int_0^l \int_{-f(x)}^{f(x)} \abs{ \frac{(f^\prime)^2 y}{f}\left[\gamma_2^\prime\left(\frac{y}{f(x)}\right)\eta_2\left(\frac{y}{f(x)}\right) - \frac{y}{f}(\gamma_3^\prime)^2\left(\frac{y}{f(x)}\right)\right]}^2 dy dx  \\ 
& \hspace{-2.5cm} + \int_0^l \int_{-f(x)}^{f(x)} \abs{\frac{1}{2}(f^\prime)^2 (\gamma_3)^2\left(\frac{y}{f(x)}\right) + \frac{(f^\prime)^2}{2}\left(\frac{y}{f(x)}\right)^2 (\gamma_3^\prime)^2\left(\frac{y}{f(x)}\right) }^2 dy dx. 
\end{align*} 
Using \eqref{ineq:gammaerror} and \eqref{ineq:etaomega} we have, 
\begin{align*} 
\int_{D} \abs{\partial_x w_1 + \frac{1}{2}\partial_x u^2}^2 dx dy & \lesssim \phi^4\int_0^l \int_{-f(x)}^{f(x)}\abs{\abs{f f^{\prime\prime}} + (f^\prime)^2 + \abs{f^{\prime\prime}}\abs{y + f(x)}}^2 dy dx \\ 
& \quad + \phi^4\int_0^l \int_{-f(x)}^{f(x)} \abs{(f^\prime)^2 + \frac{(f^\prime)^2 \abs{y}}{f} + \frac{(f^\prime)^2 \abs{y}^2}{f^2}}^2 dy dx. 
\end{align*} 
Since $f$ and its first few derivatives are comparable to the first few derivatives of \eqref{def:f0}, we may estimate these integrals as in \cite{ContiMaggi08} and deduce
\begin{align*} 
\int_D \abs{\partial_x w_1 + \frac{1}{2}(\partial_x u)^2}^2 dx dy & \lesssim \phi^4 \sigma^{5/3}l^{1/3}.
\end{align*} 
Now we turn to the $y$-term in the membrane energy. 
By \eqref{cond:FvKGamma}, 
\begin{align*}
\int_{D} \abs{\partial_y w_2 + \frac{1}{2}(\partial_y u)^2}^2 dx dy & =  \int_{\Omega} \abs{\gamma^{\prime}_2\left(\frac{y}{f(x)}\right) - 1 + \frac{1}{2}\left(\gamma_3^\prime\left(\frac{y}{f(x)}\right)\right)^2}^2 dx dy = 0. 
\end{align*}
Computing now the last term in the membrane energy, with $\beta(x,y)$ defined \eqref{def:beta}, 
\begin{align*} 
\int \abs{\partial_y w_1 + \partial_x w_2 + \partial_x u \partial_y u}^2 dx dy \\ 
& \hspace{-5cm} = \int \abs{\beta_y(x,y) - \partial_x \hat u \gamma_3^\prime\left(\frac{y}{f(x)}\right) + f^\prime(x) \eta_2\left(\frac{y}{f(x)} \right) + \left(f^\prime(x) \eta_3\left(\frac{y}{f(x)}\right) + \partial_x\hat u  \right)\gamma_3^\prime\left(\frac{y}{f(x)}\right) }^2 dx dy \\ 
& \hspace{-5cm} = \int \abs{\beta_y(x,y) + f^\prime(x) \eta_2\left(\frac{y}{f(x)} \right) + f^\prime(x) \eta_3\left(\frac{y}{f(x)}\right) \gamma_3^\prime\left(\frac{y}{f(x)}\right)}^2 dx dy \\ 
& \hspace{-5cm} = \int \abs{-f^\prime(x)\gamma^\prime\cdot \eta\left(\frac{y}{f(x)}\right) + \frac{1}{2}f^\prime(x)E  + f^\prime(x) \eta_2\left( \frac{y}{f(x)}\right) + f^\prime(x) \eta_3\left( \frac{y}{f(x)}\right) \gamma_3^\prime\left(\frac{y}{f(x)}\right)}^2 dx dy. 
\end{align*}
Then, \eqref{cond:FvKGamma} implies
\begin{align*} 
\int \abs{ \partial_y w_1 + \partial_x w_2 + \partial_x u \partial_y u}^2 dx dy & = \int\abs{\frac{1}{2}f^\prime(x) E + f^\prime(x) \eta_2\left(\frac{y}{f(x)}\right)\left(1 - \gamma_2^\prime\left(\frac{y}{f(x)} \right)\right) }^2 dx dy  \\ 
& \lesssim \int \abs{f^\prime(x) \eta_2\left(\frac{y}{f(x)}\right) \left(\gamma_3^\prime\left(\frac{y}{f(x)} \right) \right)^2}^2 dx dy + \abs{E}^2\int\abs{f^\prime(x)}^2 dx dy.  
\end{align*}  
By Lemma \ref{lem:gammachoice}, \eqref{ineq:betaintegral} and \eqref{def:f0} we may compute (using also that $\phi \leq 1$), 
\begin{align*} 
\int \abs{ \partial_y w_1 + \partial_x w_2 + \partial_x u \partial_y u}^2 dx dy & \lesssim \phi^8 \sigma l.
\end{align*}
Condition \eqref{cond:inplane} implies that $\phi^8 \sigma l \lesssim \phi^4\sigma^{5/3}l^{1/3}$. 
The bending energy computation proceeds similar to Lemma 2.1 \cite{ContiMaggi08}, although simpler 
since $\beta$ no longer plays a role (we only have to consider the term involving $F_1$ in the proof of \cite{ContiMaggi08}).    
Computing the derivatives, denoting $t = y/f(x)$, 
\begin{align*} 
\partial_{yy}u & = \frac{1}{f(x)}\gamma_3^{\prime\prime}(t) \\ 
\partial_{yx}u & = -\frac{f^\prime(x)t}{f(x)}\gamma_3^{\prime\prime}(t) \\
\partial_{xx}u & = f^{\prime\prime}(x) \gamma_3(t) - t f^{\prime\prime}(x) \gamma^\prime_3(t) + t^2\frac{(f^\prime(x))^2}{f(x)}\gamma_3^{\prime\prime}(t).  
\end{align*} 
Defining $D_\sigma = [abcd]_\sigma \cap D$, it follows from Lemma \ref{lem:gammachoice} that 
\begin{align*} 
h^3\int_{D_\sigma} \abs{D^2 u }^2 dx dy & \lesssim h^3 \phi^2 \int_\sigma^{l-\sigma} \int_{-f(x)}^{f(x)} \frac{1}{f(x)^2} +\frac{(f^\prime(x))^2}{f(x)^2} + (f^{\prime\prime}(x))^2 + \frac{(f^\prime(x))^4}{(f(x))^2} dx dy \\ 
& \lesssim h^3 \phi^2 \int_\sigma^{l-\sigma} \int_{-f(x)}^{f(x)} \frac{1}{f(x)^2} + (f^{\prime\prime}(x))^2 + \frac{(f^\prime(x))^4}{(f(x))^2} dx dy. 
\end{align*} 
Following the computations found on page 24 of \cite{ContiMaggi08} we have
\begin{align*} 
h^3 \int_{D_\sigma} \abs{D^2 u }^2 dx dy & \lesssim h^3 \phi^2\frac{l^{1/3}}{\sigma^{1/3}} + h^3\phi^2 \\ 
& \lesssim h^3 \phi^2 \frac{l^{1/3}}{\sigma^{1/3}}, 
\end{align*}
where the last line followed from the requirement that $\sigma < l$. 
Adding the contributions of the bending and membrane completes the lemma. 
\end{proof}

The following is the F\"oppl-von K\'arm\'an equivalent of Lemma 2.2 in \cite{ContiMaggi08}, used in Lemma \ref{lem:MinRidge} above. 

\begin{lemma} \label{lem:gammachoice}
Given $\alpha_L,\alpha_R \in [-1,1]$ there exists $\gamma = (0,\gamma_2(t),\gamma_3(t)) \in C^2([-1,1];\Real^3)$ such that
$\forall\, t \in [-1,1]$,
\begin{align*} 
\gamma_2^\prime(t) + \frac{1}{2}\left(\gamma_3^\prime\right)^2 - 1 = 0, 
\end{align*} 
for $t \in [-1,-2/3)$, 
\begin{align*} 
\gamma(t) = \left(0,\left(1 - \frac{1}{2}\alpha_L^2\right)t,\alpha_L t\right),
\end{align*} 
and for $t \in (2/3,1]$,
\begin{align*}
\gamma(t)  = \left(0,\left(1 - \frac{1}{2}\alpha_R^2\right)t,\alpha_R t\right). 
\end{align*} 
Moreover we may choose $\gamma$ such that $\forall\, t \in [-1,1]$,
\begin{subequations} \label{ineq:gammaerror} 
\begin{align} 
\abs{\gamma_2(t) -\left(1 - \frac{1}{2}\alpha_L^2\right)t} + \abs{\gamma_2(t) -\left(1 - \frac{1}{2}\alpha_R^2\right)t} & \lesssim \max\left(\alpha^2_L,\alpha^2_R\right) \label{ineq:gamma2}  \\ 
\abs{\gamma_3(t) - \alpha_Lt} + \abs{\gamma_3(t) -\alpha_R t} & \lesssim \max\left(\abs{\alpha_L},\abs{\alpha_R}\right) \label{ineq:gamma3} \\ 
\abs{\gamma_2^\prime(t) - \left(1 - \frac{1}{2}\alpha_L^2\right)} + \abs{\gamma_2^\prime(t) - \left(1 - \frac{1}{2}\alpha_R^2\right)} & \lesssim \max\left(\alpha^2_L,\alpha^2_R\right) \label{ineq:gamma2prime} \\
\abs{\gamma_3^\prime(t) - \alpha_L} + \abs{\gamma_3^\prime(t) - \alpha_R} & \lesssim \max\left(\abs{\alpha_L},\abs{\alpha_R}\right) \label{ineq:gamma3prime} \\
\abs{\gamma^{\prime\prime}(t)} & \lesssim \max\left(\abs{\alpha_L},\abs{\alpha_R}\right). \label{ineq:gammaprimeprime}
\end{align}
\end{subequations}
Additionally, if we write $\eta(t) = \gamma(t) - t\gamma^\prime(t)$ and $\omega(t) = \int_{-1}^t \gamma^\prime(s) \cdot \eta(s) ds$ we have $\forall\, t \in [-1,1]$,
\begin{subequations} \label{ineq:etaomega}
\begin{align} 
\abs{\eta_2(t)} = \abs{\gamma_2(t) - t\gamma_2^\prime(t)} & \lesssim \max\left(\alpha^2_L,\alpha_R^2\right) \label{ineq:etaomega1} \\
\abs{\eta_3(t)} = \abs{\gamma_3(t) - t\gamma_3^\prime(t)} & \lesssim \max\left(\abs{\alpha_L},\abs{\alpha_R}\right) \label{ineq:etaomega2} \\ 
\abs{\omega(t)} = \abs{\int_{-1}^t \gamma^\prime(s) \cdot \eta(s) ds} & \lesssim \max\left(\alpha^2_L,\alpha_R^2\right). \label{ineq:etaomega3}
\end{align}
\end{subequations}
\end{lemma}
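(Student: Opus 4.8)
The plan is to exploit the fact that, unlike the arclength constraint used by Conti and Maggi \cite{ContiMaggi08}, the F\"oppl--von K\'arm\'an condition \eqref{cond:FvKGamma}, namely $\gamma_2' + \tfrac12(\gamma_3')^2 = 1$, is affine in $\gamma_2$; hence it can be realized by a \emph{planar} curve, which is precisely why the conclusion allows $\gamma_1\equiv 0$ and makes the construction far more explicit than in \cite{ContiMaggi08}. Write $a=\alpha_L$, $b=\alpha_R$, $\phi=\max(|a|,|b|)$, and $I=[-\tfrac23,\tfrac23]$. The idea is to prescribe $g:=\gamma_3'$ on $[-1,1]$ first, then set
\[\gamma_3(t) = -a + \int_{-1}^t g(s)\,ds,\qquad \gamma_2(t) = -\bigl(1-\tfrac12 a^2\bigr) + \int_{-1}^t\bigl(1-\tfrac12 g(s)^2\bigr)\,ds,\]
so that \eqref{cond:FvKGamma} holds identically and the left linear profile $(0,(1-\tfrac12a^2)t,at)$ is met at $t=-1$. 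If moreover $g\equiv a$ on $[-1,-\tfrac23]$ and $g\equiv b$ on $[\tfrac23,1]$, then the left profile is reproduced exactly on $[-1,-\tfrac23]$ and the right profile exactly on $[\tfrac23,1]$, \emph{provided} $g$ satisfies the two scalar identities $\int_I g = \tfrac23(a+b)$ and $\int_I g^2 = \tfrac23(a^2+b^2)$; these are nothing but the matching conditions $\gamma_3(1)=b$ and $\gamma_2(1)=1-\tfrac12 b^2$.

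To produce such a $g$ I would fix, on $I$, a smooth monotone cutoff $\psi$ with $\psi\equiv0$ near $-\tfrac23$, $\psi\equiv1$ near $\tfrac23$, and the antisymmetry $\psi(t)+\psi(-t)\equiv1$ (so that $\int_I\psi=\tfrac23$), set $g_0=a+(b-a)\psi$ --- which already has the correct mean $\int_I g_0 = \tfrac23(a+b)$ --- and correct the $L^2$ identity by taking $g=g_0+c\chi$ on $I$, where $\chi$ is a fixed nonzero smooth function compactly supported in the interior of $I$ with $\int_I\chi=0$ and $c\in\Real$. The equation $\int_I g^2=\tfrac23(a^2+b^2)$ is a scalar quadratic in $c$ whose constant coefficient is $\int_I g_0^2-\tfrac23(a^2+b^2) = -(a-b)^2\bigl(\tfrac23-\int_I\psi^2\bigr)\le0$ (since $\int_I\psi^2\le\int_I\psi=\tfrac23$); hence the discriminant is nonnegative, a real root exists, and since the sum and product of the roots are $O(\phi)$ and $O(\phi^2)$ one gets $|c|\lesssim\phi$. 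With $\psi,\chi$ smooth and flat at $\pm\tfrac23$, $g$ is $C^\infty$ across the junctions with $\|g\|_{\infty},\|g'\|_{\infty}\lesssim\phi$, so $\gamma_2,\gamma_3\in C^2$ (indeed $C^\infty$) and all matching at $t=\pm\tfrac23$ is automatic.

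Given $g$, the bounds \eqref{ineq:gammaerror} are immediate: outside $I$ the curve equals one of the two linear profiles exactly, while on $I$ one has $|g-a|\le|b-a|+|c|\lesssim\phi$ and $|g'|\lesssim\phi$, whence $|\gamma_3(t)-at|=|\int(g-a)|\lesssim\phi$ and $|\gamma_2(t)-(1-\tfrac12a^2)t|=\tfrac12|\int(a^2-g^2)|\lesssim\phi^2$, and likewise with $a$ replaced by $b$ (using $|a-b|\lesssim\phi$); the derivative bounds follow the same way, and $|\gamma''|\lesssim\max(\|g'\|_\infty,\|gg'\|_\infty)\lesssim\phi$. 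For \eqref{ineq:etaomega} the clean route is to note that $\eta(t)=\gamma(t)-t\gamma'(t)$ satisfies $\eta'=-t\gamma''$ and $\eta(-1)=\gamma(-1)+\gamma'(-1)=0$, so $\eta(t)=-\int_{-1}^t s\,\gamma''(s)\,ds$; since $\gamma''$ is supported in $I$ with $|\gamma_3''|\lesssim\phi$ and $|\gamma_2''|=|gg'|\lesssim\phi^2$, this yields $|\eta_3|\lesssim\phi$ and $|\eta_2|\lesssim\phi^2$, and then $|\omega(t)|=|\int_{-1}^t(\gamma_2'\eta_2+\gamma_3'\eta_3)|\lesssim\phi^2$ because $\gamma_2'\sim1$ and $|\gamma_3'|=|g|\lesssim\phi$.

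The only step that is not pure bookkeeping is arranging the two integral identities on $I$ simultaneously while keeping $g=O(\phi)$: the mean identity comes for free from the antisymmetry of the cutoff, and the $L^2$ identity reduces to a scalar quadratic whose solvability rests on the elementary inequality $\int_I\psi^2\le\int_I\psi$ --- a smooth cutoff always \emph{undershoots} the required $L^2$ mass, so a bounded perturbation suffices. (This same scalar structure, together with the cancellation in \eqref{cond:FvKGamma}, is what later upgrades the crude bound $|E|\lesssim\phi^2$ to $|E|\lesssim\phi^4$ in \eqref{ineq:betaintegral}.) Everything else is integration and the triangle inequality.
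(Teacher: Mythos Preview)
Your argument is correct and follows essentially the same route as the paper: construct a smooth interpolation of $\gamma_3'$ between $\alpha_L$ and $\alpha_R$, observe that this interpolation \emph{undershoots} the required value of $\int(\gamma_3')^2$ (your inequality $\int_I\psi^2\le\int_I\psi$ plays the same role as the paper's observation that mollification decreases the $L^2$ norm), correct the deficit with a small bump of size $O(\phi)$, and recover $\gamma_2$ by integrating the constraint $\gamma_2'=1-\tfrac12(\gamma_3')^2$. The only differences are cosmetic --- you work directly at the level of $g=\gamma_3'$ with an explicit antisymmetric cutoff, whereas the paper mollifies the piecewise-linear $\bar\gamma$ and then adds the bump to $\gamma_3$ itself --- and your derivation of the $\eta,\omega$ bounds via $\eta'=-t\gamma''$, $\eta(-1)=0$ is a touch cleaner than simply citing the earlier estimates.
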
 
\begin{proof} 
The argument is analogous to that of \cite{ContiMaggi08}. 
First define $\bar \gamma(t)$ with 
\begin{align*} 
\bar \gamma(t) & = \left\{ 
\begin{array}{ll} 
\left(1 - \frac{1}{2}\alpha_L^2\right)t e_2 + \alpha_L t e_3 & t \in [-1,0] \\ 
\left(1 - \frac{1}{2}\alpha_R^2\right)t e_2 + \alpha_R t e_3 & t \in (0,1].  
\end{array}
\right.
\end{align*}
Let $\rho(\xi)$ be a smooth, non-negative, even function such that $\int \rho = 1$ and $\rho(\xi) = 0$ for $\abs{\xi} \geq \abs{\frac{1}{3}}$. 
Define now $\tilde\gamma = \rho \ast \bar\gamma$.
The new $\tilde\gamma$ is smooth and satisfies \eqref{ineq:gammaerror} but not \eqref{cond:FvKGamma}, which is the analogue of the unit speed condition in \cite{ContiMaggi08}.  
However, if we have $\gamma_3$, we could construct the suitable $\gamma_2$ via integration provided we have the consistency condition
\begin{align} 
\frac{1}{2}\int_{-1}^1 \left(\gamma_3^{\prime}(t)\right)^2 dt = 2 - \left[\left(1 - \frac{1}{2}\alpha_R^2\right) + \left(1 - \frac{1}{2}\alpha_L^2\right) \right], \label{cond:consistency}   
\end{align} 
which ensures the $\gamma_2$ we construct from $\gamma_3$ satisfies the correct boundary conditions at $t = \pm 1$.  
By the definition of $\tilde \gamma$ and $\bar \gamma$ we have 
\begin{align*} 
\frac{1}{2}\int_{-1}^1 \left(\tilde \gamma_3^{\prime}(t)\right)^2 dt < \frac{1}{2}\int_{-1}^1 \left(\bar \gamma_3^{\prime}(t)\right)^2 dt = 2- \left[\left(1 - \frac{1}{2}\alpha_R^2\right) + \left(1 - \frac{1}{2}\alpha_L^2\right)\right]. 
\end{align*} 
For the final step we may proceed in a fashion similar to \cite{ContiMaggi08}, inserting a smooth bump in the region $[1/3,2/3]$ to ensure that \eqref{cond:consistency} holds. For example, define 
\begin{align*} 
\gamma_3(t) & = \left\{ 
\begin{array}{ll} 
\tilde \gamma_3(t) + \lambda \rho\left(2t - 1\right) & t \in [1/3,2/3] \\ 
\tilde \gamma_3(t) & t \in [-1,1]\setminus [1/3,2/3], 
\end{array}
\right.
\end{align*} 
with $\lambda$ chosen such that \eqref{cond:consistency} holds; this is possible because the smoothing reduced the integral on the left-hand side of \eqref{cond:consistency}.  
Then, we may integrate \eqref{cond:FvKGamma} to define a suitable $\gamma_2(t)$. 
The correct choice of $\lambda$ is given by 
\begin{align*} 
\lambda^2 = \frac{1}{2\int_{1/3}^{2/3}\left(\rho^\prime(2t - 1)\right)^2 dt}\left[\frac{1}{2}\alpha_R^2 + \frac{1}{2}\alpha_L^2 - \frac{1}{2}\int_{-1}^1 \left(\tilde\gamma_3^{\prime}(t)\right)^2\right] \lesssim \max\left(\abs{\alpha_L}^2,\abs{\alpha_R}^2\right),   
\end{align*}
where the last inequality followed since \eqref{ineq:gammaerror} holds for $\tilde \gamma_3$. 
Hence $\lambda \lesssim \max(\abs{\alpha_R},\abs{\alpha_L})$, which by the way we define $\gamma$ from $\tilde \gamma_3$ implies $\gamma_3$ also satisfies \eqref{ineq:gammaerror}. 
Inequality \eqref{ineq:etaomega1} follows from \eqref{ineq:gamma2} and \eqref{ineq:gamma2prime}. 
Similarly, \eqref{ineq:etaomega2} follows from \eqref{ineq:gamma3} and \eqref{ineq:gamma3prime}. 
Finally, \eqref{ineq:etaomega3} follows from \eqref{ineq:etaomega1},\eqref{ineq:etaomega2} and \eqref{ineq:gamma3prime}. 
\end{proof} 

\noindent
{\it Step 3: Final construction and optimal length-scale:} \\
At each sharp fold in $(\bar w,\bar u)$ defined in Step 1, we add a minimal ridge as constructed in Step 2.
This is accomplished simply by demarking non-overlapping quadrilaterals whose diagonals are the sharp folds and applying Lemma \ref{lem:MinRidge} to each of these separately. See Figure \ref{fig:SmthedConstruction} for a graphical depiction of this procedure. 
Denote the new construction by $(\tilde w, \tilde u)$.
By the definition of $(\bar w,\bar u)$ given in Step 1, we have $\phi \sim \sqrt{\eta}$ in each fold.
The choice \eqref{def:sigma}, ensures that the energy of each fold given by Lemma \ref{lem:MinRidge} (neglecting the vertices) matches that of the optimal scaling for an isolated fold \eqref{ineq:optEnergy}. 
The corresponding requirement expressed in \eqref{cond:sigma} becomes \eqref{cond:2d1b}. 
Technically, $\phi$ and $\sigma$ vary depending on what fold is being regularized, however they cannot differ from one another by more than a constant factor, so we can basically treat them as being global constants for our purposes. 
Another small detail which arises here is that $(\tilde w,\tilde u)$ has a slightly different bonded region than $(\bar w,\bar u)$, since the minimal ridge lifts the film from the substrate near the edges of the square $[0,\theta l]^2$. 
Since the adjustment is $O(\sigma l)$, we may enlarge the square $[0,\theta l]^2$ by $O(\sigma)$ on the sides to ensure that after the minimal ridges are added the test function satisfies $\abs{\Omega} = \theta$; we omit the details. 
By adjusting the constant in \eqref{cond:sigma}, we may ensure that this does not change our upper bound on the energy. 
 
\begin{center}
\begin{figure}[hbpt] 
\begin{picture}(150,150)(-100,0)
\scalebox{1.0}{\includegraphics{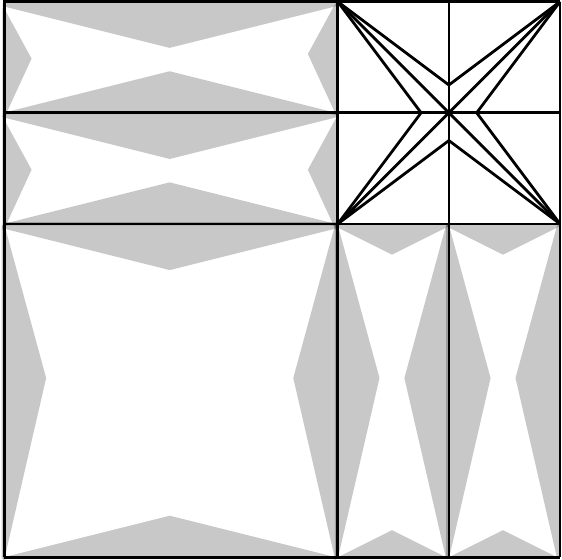}}
\end{picture}
\caption{A representative sample of quadrilaterals which can be added to apply Lemma \ref{lem:MinRidge}. Of course, similar quadrilaterals must also be added in the upper right corner.}
\label{fig:SmthedConstruction}
\end{figure}
\end{center}

An important detail which Lemma \ref{lem:MinRidge} does not address is how to handle the vertices where the folds connect; $(\tilde w,\tilde u)$ is continuous there but $\grad \tilde u$ may not be, resulting in infinite bending energy. 
Hence, $(\tilde w, \tilde u)$ must be corrected at the vertices as in Section 3 of \cite{ContiMaggi08} in order to have finite bending energy and achieve the desired scaling law.   
Denoting the vertices where the folds connect as $v_i$, $\Omega_\sigma^\star := \cup B(v_i,\sigma)$ and $\Omega_\sigma := \cup B(v_i,2\sigma)$, where $B(x_0,r) := \set{x\in \Real^2: \abs{x_0 - x} < r}$. We can assume the balls $B(v_i,2\sigma)$ are disjoint by, if necessary, adjusting the constant in \eqref{cond:sigma} (and therefore \eqref{cond:2d1b}). Away from $\Omega_\sigma^\star$, $(\tilde w,\tilde u)$ is smooth. 
Let $\rho \in C^\infty_0([-2,2];[0,1])$ with $\rho(x) = 1$ if $\abs{x} < 1$ and define our final deformation as 
\begin{align*} 
(w^h,u^h) := \sum_{i}\rho\left(\frac{\abs{x - v_i}}{\sigma}\right) (\bar w(v_i),\bar u(v_i)) +  \sum_{i}\left(1 - \rho\left(\frac{\abs{x - v_i}}{\sigma}\right)\right) (\tilde w(x),\tilde u(x)). 
\end{align*}
The following estimates follow from the definition of $(w^h,u^h)$, Lemma \ref{lem:MinRidge} and the fact that $\phi \sim \sqrt{\eta}$:
\begin{align*} 
\frac{1}{\sqrt{\eta}}\norm{\grad \bar w - \grad w^h}_{L^\infty([0,l]^2)} + \norm{\grad \bar u - \grad u^h}_{L^\infty([0,l]^2)} & \lesssim \sqrt{\eta} \\ 
\norm{D^2 u^h}_{L^\infty([0,l]^2)} & \lesssim \frac{\sqrt{\eta}}{\sigma}. 
\end{align*} 
It remains to estimate the contribution to the energy in the set $\Omega_\sigma$.  
The membrane energy is estimated by
\begin{align} 
\alpha_m h \int_{\Omega_\sigma} \abs{e(w^h) + \frac{1}{2}\grad u^h \otimes \grad u^h - \eta I}^2 dxdy \lesssim \alpha_m h \eta^2 \sigma^2 \lesssim h^3 \eta,  \label{ineq:OmegaSig1}
\end{align}
where the last inequality followed from \eqref{def:sigma}. 
The bending energy is estimated by
\begin{align} 
h^3 \int_{\Omega_\sigma} \abs{D^2u^h}^2 dx \lesssim  h^3\eta. \label{ineq:OmegaSig2}
\end{align}
Adding the contributions \eqref{ineq:optEnergy}, \eqref{ineq:substrateEst}, \eqref{ineq:OmegaSig1} and \eqref{ineq:OmegaSig2}, we get that over a single cell the total energy of $(w^h,u^h)$ is given by  
\begin{align*} 
E_{ND}[w^h,u^h] \lesssim \alpha_m^{1/6} \eta^{7/6} l^{1/3} h^{8/3} + \alpha_s \eta^2 l^3 + h^3\eta. 
\end{align*}
Summing over all $l^{-2}$ periodic cells gives a total energy of (again using equality in Cauchy-Schwarz as in the 1D case), 
\begin{align*} 
E_{ND}[w^h,u^h] \lesssim \alpha_m^{1/6} \eta^{7/6} l^{-5/3}h^{8/3} + \alpha_s \eta^2 l +  h^3\eta l^{-2}.   
\end{align*}
Optimizing $l$ between the first two terms gives
\begin{align*} 
l \sim \frac{\alpha_m^{1/16} h}{\eta^{5/16} \alpha_s^{3/8}} := l_2,    
\end{align*}
which implies
\begin{align*} 
E_{ND}[w^h,u^h] & \lesssim \alpha_m^{1/16} \eta^{27/16}\alpha_s^{5/8} h +  h \eta^{26/16} \alpha_s^{6/8} \alpha_m^{-1/8}. 
\end{align*}
Condition \eqref{cond:2d1b} then implies 
\begin{align*}
E_{ND}[w^h,u^h] & \lesssim \alpha_m^{1/16}\eta^{27/16}\alpha_s^{5/8} h.
\end{align*} 
Note that to apply Lemma \ref{lem:MinRidge} we require \eqref{cond:sigma} and \eqref{cond:inplane}; \eqref{cond:sigma} becomes $\sigma < c(\theta)l_2$ (for some $c<1$ depending on $\theta$), which is \eqref{cond:2d1b}. 
The condition \eqref{cond:2d1a} comes from the requirement that $l_2 < 1$ so that our construction fits inside unit the periodic square.  
Condition \eqref{cond:inplane} becomes (since $\phi \sim \sqrt{\eta}$), 
\begin{align*} 
\eta^2 \lesssim \frac{\sigma^{2/3}}{l_2^{2/3}},  
\end{align*} 
which becomes \eqref{cond:2d2} after simplifying. This now completes the proof of Theorem \ref{thm:2d}
\end{proof}

\vfill\eject
\bibliographystyle{plain}
\bibliography{materials}

\end{document}